\documentclass[12pt]{article}
\usepackage[a4paper,margin=1in]{geometry}
\usepackage[T1]{fontenc}
\usepackage[utf8]{inputenc}
\usepackage{amsmath,amssymb,amsthm,mathtools}
\usepackage{array}
\usepackage{booktabs}
\usepackage{enumitem}

\usepackage{shuffle}
\usepackage{hyperref}

\newcommand{\Sh}{\operatorname{\bf Sh}}
\newcommand{\Des}{\operatorname{Des}}
\newcommand{\IDes}{\operatorname{IDes}}
\newcommand{\des}{\operatorname{des}}
\newcommand{\maj}{\operatorname{maj}}

\newcommand{\sgn}{\operatorname{sgn}}
\newcommand{\nfix}{\operatorname{nfix}}
\newcommand{\nmark}{\operatorname{nmark}}

\newtheorem{con}{Conjecture}[section]

\newtheorem{lem}[con]{Lemma}
\newtheorem{prop}[con]{Proposition}

\newtheorem{thm}[con]{Theorem}

\newtheorem{pf}{proof}[section]
\theoremstyle{remark}

\makeatletter \@addtoreset{equation}{section} \makeatother
\makeindex 
\def\qed{\hfill \rule{4pt}{7pt}}

\begin{document}
 
\begin{center}
{\Large \bf Revisiting $q$-Derangement Numbers via\\[5pt] Decorated Permutations}

\vskip 4mm
Kathy Q. Ji
\vskip 2mm

Center for Applied Mathematics\\[2pt]
Tianjin University\\[2pt]
Tianjin 300072, P.R. China

\vskip 2mm
kathyji@tju.edu.cn

\end{center}

\vskip 6mm \noindent {\bf Abstract.}  This paper aims to provide a direct combinatorial proof of the Gessel-Wachs formula for
$q$-derangement numbers in the setting of decorated permutations, without using
the $q$-binomial inversion formula. Decorated permutations, introduced by
Postnikov in his study of the totally nonnegative Grassmannian, provide a natural
framework for Chen's signed fixed-point model. Our proof is based on a
major-index generating function for decorated permutations with a fixed
number of  signed fixed points, together with a sign-reversing and
descent-set-preserving involution, thereby answering a question raised by Chen. This involution was discovered through human--AI collaboration. Moreover, our framework yields an immediate proof of  a result of D\'esarm\'enien and Wachs  concerning the equidistribution between descent classes of derangements and descent classes of desarrangements. We also supply combinatorial proofs  of two recurrence relations for the $q$-derangement numbers in the setting of   desarrangements with the aid of the insertion lemma for ordinary permutations.

\noindent
{\bf Keywords:}  Derangements, desarrangements,  descent set, major index, decorated permutations,  involution, shuffle, insertion lemma

\noindent
{\bf AMS Classification:} 05A30, 05A15, 05A19

\section{Introduction}

We will follow the terminology and notation on permutations and partitions   in Andrews \cite{Andrews-1976}  and Stanley \cite{Stanley-2012}.   Let $\mathfrak{S}_n$ denote the set of all permutations on $[n]:=\{1,2,\ldots, n\}$. An integer $i$ with $1 \leq i \leq n$ is said to be a \textit{fixed point} of $\pi=\pi_1\cdots \pi_n \in \mathfrak{S}_n$ if $\pi_i = i$, and a \emph{nonfixed point} otherwise.  \textit{Derangements} are permutations with no fixed points. Let $\mathcal{D}_n$ be the set of all derangements in $\mathfrak{S}_n$ and let $d_n$ denote the number of derangements in $\mathfrak{S}_n$. It is well-known that 
\begin{equation}\label{eq:derang}
d_n = \sum_{k=0}^{n} (-1)^k \frac{n!}{k!}.
\end{equation}

In \emph{An Undergraduate Course in Combinatorics}, William Y. C. Chen gave a
simple combinatorial proof of \eqref{eq:derang} by considering permutations in
which fixed points may be assigned a minus sign. It is worth noting that Chen's signed fixed-point model happens to coincide with
 decorated permutations, namely permutations in which each fixed
point is assigned one of two colors.

  Decorated permutations were introduced by Postnikov \cite{Postnikov2006}
in his study of the totally nonnegative Grassmannian. Corteel~\cite{Corteel2007} and Williams~\cite{Williams2005}  further studied
decorated permutations in connection with permutation statistics such as weak
excedances and alignments. For the definitions of weak excedances and alignments, see \cite{Corteel2007,Williams2005}. More generally,
Blitvi{\'c} and Steingr{\'i}msson \cite{BlitvicSteingrimsson2021} introduced $k$-arrangements, namely
permutations whose fixed points are colored with one of $k$ colors;   Thus decorated permutations are precisely
$2$-arrangements; see also Fu, Han, and Lin~\cite{Fu-Han-Lin-1979}.

Let $E_n$ be the set of decorated permutations on $[n]$. A signed fixed point is denoted by $\overline{i}$. 
For   $0\leq k\leq n$, let $E_{n,k}$ denote the subset
of $E_n$ with exactly $k$ signed fixed points. Clearly, 
\begin{equation} \label{eqaa}
|E_{n,k}| = \binom{n}{k} (n-k)! = \frac{n!}{k!}.
\end{equation}
Consider the subset of $E_n$ consisting of permutations with at least one fixed point, signed or unsigned. Let $i$ be the minimum fixed point (in absolute value). Changing the sign of $i$ gives a sign-reversing involution. After the cancellation, only ordinary derangements remain, which yields \eqref{eq:derang}.

William Y. C. Chen posed the natural question of whether this signed fixed-point
framework can also be applied to the $q$-derangement numbers; see
\cite{Chen2026} for details.

To state the formula for the $q$-derangement numbers, we first recall the
definitions of the descent set and the major index on ordinary permutations. For $\pi=\pi_1\cdots \pi_n \in \mathfrak{S}_n$, we say that $1 \leq i \leq n - 1$ is a descent   if $\pi_i > \pi_{i+1}$ and $1 \leq i \leq n - 1$ is an ascent   if $\pi_i < \pi_{i+1}$. The set of descents of $\pi$ is called the descent set of $\pi$, denoted $\operatorname{Des}(\pi)$ and the number of its descents is called the descent number, denoted $\operatorname{des}(\pi)$. The major index of $\pi$, denoted $\operatorname{maj}(\pi)$, is defined to be the sum of its descents. To wit,
\[
\operatorname{maj}(\pi) := \sum_{k\in\operatorname{Des}(\pi)} k.
\]
The following formula due to MacMahon  is well-known:
 
\begin{equation}\label{eq:MacM}
    \sum_{\pi \in \mathfrak{S}_n}q^{{\rm maj}(\pi)}=[n]_q!=[1]_q[2]_q\cdots [n]_q.
\end{equation} 
Here and in the sequel, $[0]_q!=1$ and for  a positive integer $n$, we define
\[[n]_q:=\frac{1-q^n}{1-q}=1+q+\cdots +q^{n-1}\]

The $q$-derangement numbers are defined by $d_0(q) = 1$ and for $n \geq 1$, 
\[
d_n(q) = \sum_{\sigma \in \mathcal{D}_n} q^{\operatorname{maj}(\sigma)}.
\]

The following elegant formula was first derived by Gessel and published in  \cite{Gessel-Reutenauer-1993}  as a  consequence of the quasi-symmetric generating function encoding the descents and the cycle structure of permutations. Note that  setting $q=1$  in \eqref{eq:q-derange} recovers the classical derangement formula \eqref{eq:derang}.
 
\begin{thm}[Gessel-Wachs]\label{thm:q-derange} For $n\geq 1$, 
 \begin{equation} \label{eq:q-derange}
d_n(q) = \sum_{k=0}^n (-1)^k q^{k \choose 2} { [n]_q! \over [k]_q!}, 
 \end{equation} 
 \end{thm}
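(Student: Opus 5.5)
The plan is to give Chen's signed fixed-point model a $q$-analogue. First I attach to each decorated permutation $\sigma\in E_{n,k}$ a word $w(\sigma)$, and define $\maj(\sigma):=\maj(w(\sigma))$, in such a way that
\[
\sum_{\sigma\in E_{n,k}} q^{\maj(\sigma)} = q^{\binom{k}{2}}\frac{[n]_q!}{[k]_q!}.
\]
Then I construct a sign-reversing involution on $E_n$, with sign $(-1)^k$. It should preserve $\Des(w(\sigma))$, and its fixed points should be exactly the ordinary derangements, for which $w(\sigma)=\sigma$. Summing $(-1)^k q^{\maj}$ over $E_n$ then gives \eqref{eq:q-derange}.

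\textbf{Step 1 (the weighted model).} For $\sigma\in E_{n,k}$, let $S$ be the set of positions of the signed fixed points. Build $w(\sigma)$ as follows. Standardize the $n-k$ remaining entries, which may include unsigned fixed points, to a permutation $\tau$ of $[n-k]$ placed in the positions outside $S$. In the positions of $S$, read from left to right, place the letters $n,n-1,\ldots,n-k+1$, so that they form a decreasing sequence.

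The map $\sigma\mapsto(S,\tau)$ is a bijection from $E_{n,k}$ onto all pairs consisting of a $k$-subset of $[n]$ and a permutation in $\mathfrak{S}_{n-k}$. Hence $w(\sigma)$ runs exactly once over every shuffle of an arbitrary $\tau\in\mathfrak{S}_{n-k}$ with the fixed decreasing word $v=n(n-1)\cdots(n-k+1)$, whose letters lie in a disjoint alphabet.

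By the Garsia--Gessel shuffle lemma, the shuffles of $u$ and $v$ have major-index generating function $q^{\maj u+\maj v}{n\brack k}_q$. Here $\maj v=\binom{k}{2}$, and MacMahon's formula \eqref{eq:MacM} gives $\sum_\tau q^{\maj\tau}=[n-k]_q!$. Multiplying yields the displayed identity. If a self-contained argument is preferred, I would instead prove the shuffle identity by the usual insertion argument, inserting the large letters one at a time.

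\textbf{Step 2 (the involution).} Call $\sigma$ a non-derangement if it has a fixed point of either kind. I want to toggle one distinguished fixed point $i$ between $i$ and $\overline{i}$. Toggling changes $k$ by $\pm1$, so it reverses the sign. However, it changes the letter at position $i$: the letter is a rank among the unsigned entries in one state and a large letter in the other. It also shifts the standardized values of the other unsigned entries, which can create or destroy descents at positions $i-1$ and $i$.

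So the key is to choose $i$ canonically, say the largest fixed point, or the first one scanning from the right, in a way that is invariant under the toggle and for which the descent pattern around position $i$ is unchanged. Where a pure toggle fails, I would compose it with a local rearrangement of the adjacent block of large letters; this preserves $\Des$ because the large letters are decreasing among themselves. The fixed-point-free elements are exactly those with $k=0$ and no unsigned fixed points, namely $\mathcal{D}_n$, and for these $w(\sigma)=\sigma$.

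\textbf{Main obstacle.} I expect the main obstacle to be finding this descent-set-preserving involution. Toggling the minimum fixed point, which works at $q=1$, does not respect descents. I would first test the rule ``toggle the rightmost fixed point $i$ such that $w(\sigma)_{i-1}$ and $w(\sigma)_{i+1}$ compare with both possible letters at position $i$ in the same way.'' I would check on small $n$ that such an $i$ always exists when $\sigma\notin\mathcal{D}_n$, and that the rule is an involution. If it is not, I would refine the choice of $i$ by considering maximal runs of consecutive signed and unsigned fixed points.
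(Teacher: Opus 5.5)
\textbf{There is a genuine gap: Step 2, which carries all the difficulty, is not carried out, and the rule you propose cannot work.}

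Your Step 1 is correct. It is the paper's Section 2 argument (Garsia--Gessel plus MacMahon) with the signed letters placed at the top of the alphabet as $n(n-1)\cdots(n-k+1)$, where the paper puts them at the bottom as $\delta_k$. Either choice gives $q^{\binom k2}[n]_q!/[k]_q!$; the ``$u$'' in your shuffle step should be $\tau$. Step 2, however, is only a hope. The descent-set-preserving involution is the whole content of the result (the paper's involution on $G_n$, Section 3).

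The rule you plan to test fails already for $n=3$. Take $\sigma=321$. Its only fixed point is $2$, and toggling it gives $3\overline{2}1$ with $w=231$, so $\Des$ changes from $\{1,2\}$ to $\{2\}$. Hence no admissible $i$ exists. In your model the elements with $\Des(w)=\{1,2\}$ are $321$, $\overline{1}32$, $\overline{1}\,\overline{2}3$ and $\overline{1}\,\overline{2}\,\overline{3}$, with signs $+,-,+,-$. So the partner of $321$ must be $\overline{1}32$ or $\overline{1}\,\overline{2}\,\overline{3}$. Any valid involution must therefore change the underlying permutation and move which letter is fixed. No in-place toggle can do this. Your fallback of rearranging an adjacent block of large letters is unavailable here, because $321$ has no signed letters.

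The missing idea is the paper's. Fix the derangement part $dp(\pi)=\sigma\in\mathcal{D}_k$ and apply Wachs's map $\psi_n$: subcedants go to $1,\dots,s(\sigma)$, fixed points go to $\pm(s(\sigma)+i)$, and excedants go to the top values. This identifies $E_n(\sigma)$, preserving descent sets, with the shuffles of the single word $\tilde\sigma=\psi_n(\sigma)$ with signed words $\gamma$ on the consecutive values $s(\sigma)+1,\dots,s(\sigma)+n-k$.

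In these coordinates the only letters lying strictly between $\overline{s(\sigma)+1}$ and $s(\sigma)+1$ are the subcedant images $1,\dots,s(\sigma)$. So you can flip the sign of $\gamma_1$ and slide it across a maximal run of such letters in a way that preserves every comparison. The only elements with empty $\gamma$ are the derangements themselves.

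This construction is built for the order $\bar n<\cdots<\bar1<1<\cdots<n$, in which signed letters sit below the positive ones. With your convention you would have to redo it. It is simpler to adopt the paper's order, under which your Step 1 goes through verbatim with $\delta_k$ in place of $n(n-1)\cdots(n-k+1)$.
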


A combinatorial proof of \eqref{eq:q-derange} has been obtained by Wachs  \cite{Wachs1989}. Let us first review the combinatorial settings of Wachs.  Let $A=\{0<a_1<a_2<\cdots <a_n\}$ and let $\mathfrak{S}_A$ denote the set of permutations of the set $A$. For
 $\pi=\pi_1\cdots \pi_n \in \mathfrak{S}_A$, the  reduction of $\pi$ is the permutation in $\mathfrak{S}_n$ by replacing each letter $a_j$  by $j$.  For example, $\pi = 9\, 3\, 8\, 10\, 12\, 2\, 7$  is the permutation of the set $A=\{2,\,3,\,7,\,8,\,9,\,10,\,12\}$, its reduction is $5\, 2\, 4\, 6 \,7\, 1\, 3$, which is a permutation in  $\mathfrak{S}_7$.

 Let $\pi=\pi_1\cdots\pi_n \in \mathfrak{S}_n$. The derangement part of $\pi$, denoted $dp(\pi)$, is the reduction of the subword of non-fixed points of $\pi$. Recall that $\pi_i$ is called the non-fixed points of $\pi$ if $\pi_i \neq i$. For example, let's take the following permutation in $\mathfrak{S}_9$:
\begin{equation}\label{exa-s}
\pi = 1\, 5\, 3\, 7\, 6\, 2\, 9\, 8\, 4,
\end{equation}
there are three fixed points, which are $1,3,8$ and six non-fixed points:  $5,\,7,\,6,\,2,\,9,\,4$. The reduction of non-fixed points of $\pi$ is $3\, 5\, 4\, 1\,6\, 2$, so the derangement part of $\pi$ is
\[dp(\pi)= 3\, 5\, 4\, 1\,6\, 2.\]

 Wachs \cite{Wachs1989}   established the following relation: 
\begin{prop}[Wachs]\label{DSn}
Let $0\leq k\le n$ and $\sigma \in \mathcal{D}_k$. We have
\begin{equation}\label{DSn-eq}
    \sum_{dp(\pi)=\sigma \atop \pi \in \mathfrak{S}_n} q^{{\rm maj}(\pi)}=q^{{\rm maj}(\sigma)}{n \brack k}_q,
\end{equation}
where
\[{n \brack k}_q=\frac{[n]_q!}{[k]_q![n-k]_q!}\]
is the $q$-binomial coefficients.
\end{prop}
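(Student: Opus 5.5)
The plan is to reduce the statement to a distribution result for $0$--$1$ words. A permutation $\pi\in\mathfrak{S}_n$ with $dp(\pi)=\sigma$ is determined by its set $F\subseteq[n]$ of fixed points, where $|F|=n-k$. Conversely, given any $F$ with $|F|=n-k$, one places the letters of $F$ at their own positions and fills the remaining positions, in increasing order, with the complementary values arranged in the pattern $\sigma$. The result has no further fixed points because $\sigma$ is a derangement. So the left side of \eqref{DSn-eq} is a sum over all $\binom{n}{k}$ subsets $F$, or equivalently over all words $w\in\{0,1\}^n$ with $n-k$ ones. Since ${n\brack k}_q=\sum_w q^{\mathrm{inv}(w)}$, it suffices to show that the excess $\maj(\pi_w)-\maj(\sigma)$ is a statistic on $w$ with this inversion distribution. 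The cleanest option would be for the excess to equal $\mathrm{inv}(w)$ itself, or its complement under reversal.

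I would prove this by induction on $n$, using the $q$-Pascal recurrence ${n\brack k}_q={n-1\brack k}_q+q^{n-k}{n-1\brack k-1}_q$ and splitting on the status of the letter $n$. If $n\in F$, then $\pi_n=n$ and the last position is an ascent. Deleting $n$ gives a permutation in $\mathfrak{S}_{n-1}$ with the same derangement part and the same major index, so these terms contribute $q^{\maj(\sigma)}{n-1\brack k}_q$ by induction. If $n\notin F$, the value $n$ is a non-fixed letter. In the pattern it plays the role of the letter $k$ of $\sigma$, at the position $j$ with $\sigma_j=k$. Here I would delete $n$ and compare with the permutation whose derangement part is $\sigma$ with $k$ removed and reduced. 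That comparison is not of the same shape, so it is the delicate step.

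The cleaner route, and what I would try first, is a local move lemma. Suppose $i\in F$ and $i+1\notin F$, and let $F'$ be obtained by replacing $i$ with $i+1$, which corresponds to an adjacent transposition $10\to01$ in $w$. The claim is that $\maj(\pi_{F'})=\maj(\pi_F)+1$, or $-1$ uniformly. To verify it, I would look at the window around positions $i,i+1$. Moving the fixed letter past a single non-fixed letter $x$ keeps the relative order of the non-fixed letters. The descent structure changes only at the boundary between the two positions, while every descent further to the right keeps its position. A case check on whether $x$ is larger or smaller than $i$, and on its neighbours, should show that a descent index shifts by exactly one. Granting the lemma, one starts from the word with all ones at the end, i.e.\ $F=\{k+1,\dots,n\}$. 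There $\pi$ is $\sigma$ followed by an increasing run of large fixed letters, so $\maj(\pi)=\maj(\sigma)$. Each adjacent swap then changes $\maj$ by exactly one, and we get $\maj(\pi_w)=\maj(\sigma)+\mathrm{inv}(w)$ for the appropriate orientation. Summing over $w$ gives the result.

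The main obstacle is this local move lemma. Because fixed points carry the value $i$, the neighbouring comparisons depend on how $x$ relates to $i$. A plain "$\pm1$" may fail in some configurations, for instance when $x$ is adjacent to another fixed point. In that case I would replace single swaps by moving a fixed point across a whole maximal block of non-fixed letters. I would then show that the change in $\maj$ equals the length of the block, which is again the change in $\mathrm{inv}(w)$. Failing that, I would fall back on the induction on $n$ above, handling the case $n\notin F$ by inserting $n$ and tracking how the descents to its right shift.
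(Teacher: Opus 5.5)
\textbf{There is a genuine gap.} Your reduction to the fixed-point set $F$ (equivalently, the $0$--$1$ word $w$) is correct. The step everything rests on is false, however. The excess $\maj(\pi_F)-\maj(\sigma)$ is not $\mathrm{inv}(w)$ in either orientation, and the local move lemma already fails in the smallest case.

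Take $\sigma=21$ and $n=3$. The words $w=001,010,100$ (i.e.\ $F=\{3\},\{2\},\{1\}$) give $\pi=213,\,321,\,132$, with major indices $1,3,2$. So the excesses are $0,2,1$, whereas $\mathrm{inv}(w)$ is $0,1,2$ or $2,1,0$. The swap $100\to010$ raises $\maj$ by $1$, but the swap $010\to001$ lowers it by $2$. In that second swap the fixed point crosses a single non-fixed letter, so your block-move fallback (change equal to block length) fails on the same example.

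The mechanism you describe, in which one descent slides by one position and the others stay put, cannot be right, because $\des(\pi_F)$ is not constant in $F$: $\des(132)=1$ but $\des(321)=2$. A fixed point $i$ followed by a subcedant produces a descent at $i$; followed by an excedant, it does not. So descents are created and destroyed as $F$ moves.

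Worse, the excess is not a function of $w$ alone. For $n=5$ and $F=\{3\}$ it equals $1$ when $\sigma=2143$ ($\pi=21354$) but $3$ when $\sigma=2341$ ($\pi=24351$). So no $\sigma$-independent statistic on $w$ can do the job.

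Your induction fallback is left open exactly where it matters. When $n\notin F$, deleting $n$ shifts every later position, so the fixed points to its right stop being fixed. Also, $\sigma$ with $k$ removed need not be a derangement (e.g.\ $\sigma=21$), so the induction hypothesis does not apply.

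The missing idea is to use the excedance/subcedance structure of $\sigma$ rather than hide it. This is Wachs' argument as recalled in the paper, and extended to the signed case in Lemma~\ref{lem:inv}.
\begin{itemize}
\item Non-fixed letters of $\pi_F$ have the same type as the corresponding letters of $\sigma$.
\item Comparisons between adjacent letters of different types are forced by position. A fixed point $i$ exceeds an adjacent subcedant and is exceeded by an adjacent excedant. An excedant at $j$ followed by a subcedant at $j+1$ is a descent, and a subcedant followed by an excedant is an ascent.
\item Hence relabel the subcedants by $1,\dots,s$, the fixed points by $s+1,\dots,s+n-k$, and the excedants by the top values, keeping relative order within each class. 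This preserves descent sets.
\item This relabelling carries $\{\pi\in\mathfrak{S}_n : dp(\pi)=\sigma\}$ bijectively onto the shuffles of $\widetilde{\sigma}=\psi_n(\sigma)$ with the increasing word $(s+1)\cdots(s+n-k)$.
\item Since $\maj(\widetilde{\sigma})=\maj(\sigma)$, the Garsia--Gessel formula gives $q^{\maj(\sigma)}{n\brack k}_q$.
\end{itemize}
The $q$-binomial comes from that genuinely non-local shuffle theorem, not from an inversion count on the position word.
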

Summing over all derangements $\pi \in \mathcal{D}_k$ and $0 \leq k \leq n$, and applying \eqref{eq:MacM}, we can deduce from \eqref{DSn-eq} that
\[
[n]! = \sum_{k=0}^{n} {n\brack k}_q d_k(q).
\]
Thus \eqref{eq:q-derange} follows from the $q$-binomial inversion \cite[Corollary 3.38]{Aigner-1979}.

In order to justify the relation \eqref{DSn-eq}, Wachs \cite{Wachs1989} found a bijection on $\mathfrak{S}_n$ by rearranging a permutation $\pi$ according to \textit{excedant} ($\pi_i > i$), fixed point, and \textit{subcedant} ($\pi_i < i$). She showed that this bijection preserves the major index. Then the following result of Garsia-Gessel \cite[Theorem 3.1]{Garsia-Gessel-1979} on shuffles of permutations is applied to establish Theorem  \ref{thm:q-derange}.

Let $\pi \in \mathfrak{S}_j$ and $\delta \in \mathfrak{S}_k$ be two disjoint permutations, that is, permutations with no letters in common. We say that $\alpha \in \mathfrak{S}_{j+k}$ is a shuffle of $\pi $ and $\delta$ if both $\pi$ and $\delta$ are subsequences of $\alpha$. The set of shuffles of $\pi $ and $\delta$ is denoted $\pi \shuffle \delta$.  For example, let $\pi=263$ and $\delta=14$, we have 
\[
263\shuffle 14 = \{26314, 26134, 26143, 21463, 21634, 21643, 12463, 14263, 12634, 12643\}. \]
 
\begin{thm}[Garsia-Gessel] Let $\pi \in \mathfrak{S}_m$ and $\delta \in \mathfrak{S}_n$ be two disjoint permutations. We have 
\begin{equation}\label{Gasia-Gessel}
\sum_{ \alpha \in  \pi\shuffle \delta} q^{\operatorname{maj}(\alpha)}
= q^{\operatorname{maj}(\pi)+\operatorname{maj}(\delta)} {n+m \brack n}_q.  
\end{equation}
\end{thm}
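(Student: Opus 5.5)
We prove the Garsia--Gessel formula \eqref{Gasia-Gessel} by induction on $m+n$. The idea is to split the shuffles according to which letter comes last, and to track how the major index changes when the last letter is removed. First we reduce to a canonical alphabet. The descent set of a shuffle $\alpha$ depends only on the relative order of its letters, so we may assume $\pi\cup\delta$ (as sets) is $[m+n]$. A shuffle $\alpha$ is then encoded by the positions its $\delta$-letters occupy together with the relative order of the letters. Write $F(\pi,\delta)$ for the left-hand side of \eqref{Gasia-Gessel}.

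A cleaner route than the last-letter recursion is Garsia--Gessel's original partition argument, which we adopt. Let $M=m+n$.

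\textbf{Step 1.} We use the standard bijection: pairs $(\alpha,\lambda)$, where $\alpha\in\mathfrak S_M$ and $\lambda$ is a weakly decreasing sequence $\lambda_1\ge\cdots\ge\lambda_M\ge0$ with $\lambda_i>\lambda_{i+1}$ whenever $i\in\Des(\alpha)$, correspond bijectively to sequences $(f_1,\dots,f_M)$ of nonnegative integers. The correspondence sets $f_{\alpha_i}=\lambda_i$; conversely one sorts the $f$'s, breaking ties by increasing index. Summing over $\lambda$ gives
\[
\sum_{\lambda} q^{|\lambda|}=\frac{q^{\maj(\alpha)}}{(1-q)\cdots(1-q^M)}.
\]

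\textbf{Step 2.} We restrict to $\alpha\in\pi\shuffle\delta$. The pair $(\alpha,\lambda)$ then restricts to two pairs. The first is $(\pi,\mu)$, where $\mu$ is the subsequence of $\lambda$ at the positions of $\pi$-letters. The second is $(\delta,\nu)$, defined the same way. We check that $\mu$ is compatible with $\pi$ (strict drops at descents of $\pi$), and likewise $\nu$ with $\delta$. The compatibility follows because the ties-broken-by-index sorting rule for $f$ restricted to the letters of $\pi$ is exactly the rule for $\pi$. Conversely, given compatible $(\pi,\mu)$ and $(\delta,\nu)$, we form $f$ on $\pi\cup\delta$ and sort it. The sorted word is a shuffle of $\pi$ and $\delta$, and the resulting $\lambda$ is compatible with it. So we obtain a bijection
\[
\{(\alpha,\lambda):\alpha\in\pi\shuffle\delta\}\;\longleftrightarrow\;\{(\pi,\mu)\}\times\{(\delta,\nu)\},
\]
with $|\lambda|=|\mu|+|\nu|$.

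\textbf{Step 3.} Taking generating functions on both sides of this bijection gives
\[
\frac{F(\pi,\delta)}{(q;q)_{M}}=\frac{q^{\maj(\pi)}}{(q;q)_m}\cdot\frac{q^{\maj(\delta)}}{(q;q)_n}.
\]
Multiplying through by $(q;q)_M$ and noting $(q;q)_M/\bigl((q;q)_m(q;q)_n\bigr)={m+n\brack n}_q$ yields \eqref{Gasia-Gessel}.

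The main obstacle is Step 2, and specifically the tie-breaking. It must be checked that the sorting rule on the full alphabet restricts consistently to each of the two subalphabets. It must also be checked that equal values of $f$ on letters from $\pi$ and from $\delta$ get interleaved in a way that produces a compatible $\lambda$, meaning no descent of $\alpha$ falls inside a block of equal values. Both checks reduce to the single fact that within a block of equal values, letters are listed in increasing order, so such a block contains no descent.
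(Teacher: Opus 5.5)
Your proof is correct. The paper itself gives no proof of this statement. It quotes it from Garsia--Gessel's Theorem~3.1 and uses it only as a black box in Section~2, applied to $\delta_k\shuffle\beta$, to obtain Theorem~\ref{main1}.

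Your argument is essentially the classical Garsia--Gessel ($P$-partition) proof. It is the same mechanism behind the Chen--Xu labeled-partition reformulation \eqref{eq:1.6} that the paper mentions for Wachs' formula. The crux of your Step~2 holds:
\begin{itemize}
\item Sorting by the key ``value decreasing, then letter increasing'' is a total order on letters.
\item Write $\alpha(f)$ for the sorted word and $A$, $B$ for the alphabets of $\pi$, $\delta$. Then the restriction of $\alpha(f)$ to $A$ is exactly $\alpha(f|_A)$.
\item Hence $\alpha(f)\in\pi\shuffle\delta$ if and only if $\alpha(f|_A)=\pi$ and $\alpha(f|_B)=\delta$.
\item The $f$-sequences therefore split as a product, and the generating function factors.
\end{itemize}
Compared with citing the theorem, this makes Section~2 self-contained. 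The same factorization also gives the several-word version, with a $q$-multinomial coefficient, at no extra cost.

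Three small repairs are needed.
\begin{itemize}
\item Delete your opening paragraph announcing an induction on $m+n$ via the last letter. It is abandoned and plays no role.
\item In Step~1 you assert $\sum_{\lambda}q^{|\lambda|}=q^{\maj(\alpha)}/(q;q)_M$ without proof, and it is the one real computation. Replace $\lambda_i$ by $\lambda_i-|\{j\in\Des(\alpha): j\ge i\}|$. This is a bijection onto partitions with at most $M$ parts (zeros allowed), and it removes exactly $\sum_{j\in\Des(\alpha)} j=\maj(\alpha)$ from the size.
\item The hypothesis ``disjoint $\pi\in\mathfrak{S}_m$, $\delta\in\mathfrak{S}_n$'' must be read as permutations of disjoint $m$- and $n$-element sets. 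Your standardization step handles this, and Step~1 applies verbatim to words on the alphabets $A$ and $B$.
\end{itemize}
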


Inspired by MacMahon's original proof of \eqref{eq:MacM}, Chen and Xu \cite{Chen-Xu-2008}  present an alternative approach to Wachs' formula \eqref{DSn-eq} based on the following reformulation   in terms of labeled partitions: 
\begin{equation}
\frac{1}{(q)_n} \sum_{\substack{\pi \in \mathfrak{S}_n \\ dp(\pi)=\sigma}} q^{\operatorname{maj}(\pi)}
= \frac{1}{(q)_k (q)_{n-k}} q^{\operatorname{maj}(\sigma)}.
\label{eq:1.6}
\end{equation}

 William Y. C. Chen raised the problem of finding a direct proof of
Theorem~\ref{thm:q-derange} in the setting of decorated permutations, without
using the $q$-binomial inversion formula. As a first step, Chen extended the descent set and the major index from ordinary
permutations to decorated permutations in $E_n$. This is done by regarding a
decorated permutation as a word in the alphabet
\begin{equation}\label{order}
\bar n<\cdots<\bar2<\bar1<1<2<\cdots<n.
\end{equation}
For example,
\[
\Des(2\,1\,3\,\overline{4}\,6\,5\,\overline{7})=\{1,3,5,6\},
\quad 
\maj(2\,1\,3\,\overline{4}\,6\,5\,\overline{7})=15.
\]

Chen~\cite{Chen2026} observed the following formula for the major-index
generating function over decorated permutations with a fixed number of
 signed fixed points. When $q=1$, this formula reduces to \eqref{eqaa}.

\begin{thm}\label{main1}
For $0\leq k\leq n$, let $E_{n,k}$ denote the set of 
decorated permutations in $E_n$ with exactly $k$  signed fixed points. We have 
\begin{equation}\label{eq:main1}
\sum_{\pi\in E_{n,k}} q^{\maj(\pi)}
=
q^{\binom{k}{2}}\frac{[n]_q!}{[k]_q!}.
\end{equation}
\end{thm}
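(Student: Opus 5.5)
The plan is to reduce \eqref{eq:main1} to MacMahon's formula \eqref{eq:MacM} and the Garsia--Gessel shuffle theorem \eqref{Gasia-Gessel}. We do this by \emph{standardizing} a decorated permutation, that is, replacing its letters by $1,\dots,n$ while keeping their relative order in the alphabet \eqref{order}. The descent set, and hence $\maj$, depends only on the relative order of the letters. So it suffices to describe the standardized words arising from $E_{n,k}$ and to show that standardization is injective on $E_{n,k}$.

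Fix $\pi\in E_{n,k}$, and let $S=\{s_1<\cdots<s_k\}$ be the set of positions of its signed fixed points; position $s_j$ carries $\overline{s_j}$. In the order \eqref{order}, every signed letter is smaller than every unsigned letter. Moreover $\overline{s_1}>\overline{s_2}>\cdots>\overline{s_k}$. Reading left to right, the signed letters therefore form a strictly decreasing subword occupying the $k$ smallest values. Let $T=[n]\setminus S$. The unsigned letters form an arbitrary arrangement of $T$ in the positions $[n]\setminus S$. Such an arrangement may itself contain (unsigned) fixed points, and there is no constraint on it, which is why $|E_{n,k}|=\binom{n}{k}(n-k)!$. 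Its reduction $\tau$ is therefore an arbitrary element of $\mathfrak{S}_{n-k}$. Define $\Phi(\pi)$ by replacing $\overline{s_j}$ with $k+1-j$ and replacing the unsigned letters by $k+\tau_1,\dots,k+\tau_{n-k}$ in order. Then $\Phi(\pi)$ is a shuffle of the decreasing word $\delta=k\,(k-1)\cdots 1$ with the word $\tau+k$ on the letters $\{k+1,\dots,n\}$, and $\Des(\Phi(\pi))=\Des(\pi)$.

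Next I would check that $\Phi$ is a bijection from $E_{n,k}$ onto $\bigcup_{\tau\in\mathfrak{S}_{n-k}} \delta\shuffle(\tau+k)$. Given such a shuffle $\alpha$, the positions of the letters $1,\dots,k$ recover $S$, and these positions are then filled with $\overline{s_1},\dots,\overline{s_k}$. The remaining subword, after subtracting $k$, recovers $\tau$. Placing the elements of $T$ in the order-isomorphic arrangement then reconstructs $\pi$ uniquely. Every choice of $S$ and $\tau$ occurs, so $\Phi$ is onto, and the two constructions are inverse to each other.

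Finally, apply \eqref{Gasia-Gessel} with $\maj(\delta)=1+2+\cdots+(k-1)=\binom{k}{2}$, and then \eqref{eq:MacM}:
\[
\sum_{\pi\in E_{n,k}}q^{\maj(\pi)}=\sum_{\tau\in\mathfrak{S}_{n-k}} q^{\binom{k}{2}+\maj(\tau)}{n\brack k}_q
= q^{\binom{k}{2}}[n-k]_q!\,{n\brack k}_q = q^{\binom{k}{2}}\frac{[n]_q!}{[k]_q!}.
\]
The only point needing care is the bijectivity of $\Phi$. The key observation is that the \emph{positions} of the signed letters determine their \emph{values}, so a shuffle never produces an inconsistent decorated permutation. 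Once this is checked, the rest is routine.
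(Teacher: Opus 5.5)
Your proposal is correct and follows essentially the same route as the paper: your standardization map $\Phi$ is the paper's canonical descent-preserving bijection $\phi\colon E_{n,k}\to\bigcup_{\beta}\delta_k\shuffle\beta$, and you then apply Garsia--Gessel and MacMahon exactly as the paper does. You also spell out why $\phi$ is a well-defined descent-preserving bijection (the positions of the signed letters determine their values), a step the paper leaves implicit.
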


As mentioned by Chen~\cite{Chen2026},  Catherine Yan established a proof of \eqref{eq:main1} by adapting the insertion lemma for classical permutations to decorated permutations, while Peter Guo gave a proof using Stanley's theory of
$P$-partitions. For more information on the insertion lemma for ordinary permutations, we refer to 
Gupta \cite{Gupta-1978},  Haglund, Loehr and Remmel \cite{Haglund-Loehr-Remmel-2005} and Han \cite{Han-1992}. 

In Section~2, we provide a simple direct proof based on  MacMahon’s major index formula \eqref{eq:MacM}  
and the Garsia--Gessel shuffle formula \eqref{Gasia-Gessel}.

Chen further asked for a sign-reversing and descent-set-preserving involution on
$G_n=E_n\setminus \mathcal{D}_n,$ where $\mathcal{D}_n$ is regarded as the set of ordinary unsigned derangements
embedded in $E_n$. More precisely, 
\begin{thm}\label{main2}
Let $G_n$ denote the set of decorated permutations on $[n]$  with at least one
fixed point, either signed or unsigned. For $\pi\in G_n$, let $\nfix(\pi)$ denote the number of
 signed fixed points of $\pi$. We have 
\begin{equation}
\sum_{\pi\in G_n}(-1)^{\nfix(\pi)}q^{\maj(\pi)}=0.
\end{equation}
\end{thm}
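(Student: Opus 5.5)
The plan is to prove the identity by a sign-reversing involution $\phi$ on $G_n$ that changes $\nfix$ by exactly one and preserves the descent set. Then $\maj$ is preserved and the signed sum cancels termwise. I would not derive the identity algebraically from Theorem~\ref{main1}: Theorem~\ref{main1} together with Theorem~\ref{main2} is exactly what yields \eqref{eq:q-derange}, so using \eqref{eq:q-derange} here would be circular.

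\textbf{Step 1.} Reduce descent-set preservation to local conditions. Since $\Des$ depends only on the relative order of adjacent letters in the total order \eqref{order}, changing a single fixed point $i$ to $\overline{i}$ (or back) can affect only the comparisons at positions $i-1$ and $i$. The letter $\overline{i}$ lies below every unsigned letter. Among signed letters, $\overline{i}$ lies above $\overline{j}$ exactly when $j>i$, whereas unsigned $i$ lies above every signed letter. So toggling $i$ preserves $\Des$ precisely when, at each neighbouring position, the comparison with $i$ and with $\overline{i}$ gives the same answer. This holds, for instance, when each neighbour is either an unsigned letter larger than $i$, or a signed $\overline{j}$ with $j>i$ (in the latter case $\overline{j}<\overline{i}<i$ and $\overline{j}<i$ agree). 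Call such a fixed point \emph{free}.

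\textbf{Step 2.} Handle permutations with a free fixed point. I would first try the rule: let $i$ be the smallest free fixed point of $\pi$ and toggle its sign. Then I must check that being free, and being the smallest free fixed point, is unaffected by the toggle. Freeness of $i$ itself depends only on the neighbours, which do not move. For another fixed point $j$ adjacent to $i$, its freeness could change, and I would verify case by case, using $j>i$ when $j$ is free, that the minimum is stable.

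\textbf{Step 3.} Handle permutations in $G_n$ with no free fixed point. These must be paired with a move that is not a pure sign toggle but a local rearrangement. In a window around a fixed point $i$ whose neighbour is a smaller unsigned letter $a<i$, I would exchange the value pattern $a,i$ with a pattern involving $\overline{i'}$ for some shifted fixed point $i'$, choosing the new values so that the standardization of the window, and hence its up/down pattern, is unchanged. The natural first attempt is to imitate Wachs' excedant/subcedant rearrangement. Such a rearrangement relocates fixed points while keeping the relative order of the non-fixed letters. This suggests choosing the rearrangement so that it sends a fixed point to a sign-flipped fixed point with the same descent pattern.

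\textbf{Main obstacle.} Step~3 is the crux: a naive toggle of the minimal fixed point, as in the $q=1$ argument, fails because it reverses comparisons with smaller non-fixed letters. If no explicit local move works, my fallback is to prove the refined statement
\[
\sum_{\pi\in E_n,\ \Des(\pi)=S}(-1)^{\nfix(\pi)}=\#\{\sigma\in\mathcal D_n:\Des(\sigma)=S\}\quad\text{for every } S\subseteq[n-1].
\]
I would attack this by writing $E_{n,k}$ via the shuffle decomposition used for Theorem~\ref{main1}: the signed letters form the decreasing word $\overline{i_1}\cdots$ shuffled with the rest. Applying the descent-set refinement of the Garsia--Gessel shuffle formula \eqref{Gasia-Gessel} and inclusion--exclusion over $k$ should then produce the derangement count. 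Reading that computation backwards should indicate which pairs the involution must match.
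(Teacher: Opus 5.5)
Your Steps 1--2 are sound, and simpler than you expect. A fixed point $i$ is free exactly when both neighbours have absolute value larger than $i$. That condition does not depend on any signs, so toggling never changes which fixed points are free, and no case check is needed.

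\textbf{The gap is Step 3.} It carries all the difficulty, and no move is actually defined there. The class with no free fixed point cannot be avoided. In $G_3$ it is $\{213,\,21\overline{3},\,321,\,3\overline{2}1\}$, and within it the pairing is forced: $213\leftrightarrow 3\overline{2}1$ and $21\overline{3}\leftrightarrow 321$. The partners have different underlying permutations and different fixed points, because a fixed point is pinned to its position ($\pi_i\in\{i,\overline{i}\}$). Moving it forces a relabelling of other letters, so a "local window exchange" performed in $\pi$ itself has no evident well-defined, involutive, descent-preserving form.

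\textbf{The missing idea is the paper's.}
\begin{enumerate}
\item Refine by the derangement part $dp(\pi)=\sigma$.
\item Apply Wachs' descent-set-preserving relabelling $\psi_n$, which carries $E_n(\sigma)$ bijectively onto $\Sh(\widetilde{\sigma},\Gamma(\sigma))$. Writing $s=s(\sigma)$ and $m=n-k$: subcedants become $1,\dots,s$, fixed points become a word $\gamma$ with absolute values $s+1<\dots<s+m$, and excedants become the largest values.
\item In this model, $\gamma_1$ can be moved among the letters of $\widetilde{\sigma}$ without leaving the shuffle set. Its sign affects only comparisons with letters strictly between $\overline{s+1}$ and $s+1$, and those are exactly the subcedant letters $1,\dots,s$.
\item Flip the sign of $\gamma_1$. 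When a neighbour is a subcedant letter, slide $\gamma_1$ across a maximal monotone run of subcedant letters so that the descent set is unchanged. Then pull back by $\psi_n^{-1}$.
\end{enumerate}
This would also complete your own scheme. A fixed point is free iff its left neighbour is an excedant or absent and its right neighbour is not a subcedant. The slide only crosses subcedant letters, so it preserves the class with no free fixed point.

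\textbf{Your fallback does not close the gap.} Through the shuffle decomposition $\phi$ of $E_{n,k}$, the left side $\sum_{\Des(\pi)=S}(-1)^{\nfix(\pi)}$ equals $\sum_{k}(-1)^k\#\{\sigma\in\mathfrak S_{n,k}:\Des(\sigma)=S\}$. Since $\sigma\in\mathfrak S_{n,k}$ iff the first ascent of $\sigma^{-1}$ is at least $k$, this telescopes to $\#\{\sigma\in\mathcal F_n:\Des(\sigma)=S\}$. That is a desarrangement count, not a derangement count; it is exactly the computation in Section~4.

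A decomposition by signed letters never sees derangements. Equating the two counts is the D\'esarm\'enien--Wachs theorem, a substantial result that the paper obtains as a corollary of the very statement you are proving. Without citing it, you would need Wachs' descent-refined decomposition by derangement part plus an inversion step (the descent-set analogue of $q$-binomial inversion). That is precisely the route the paper is designed to avoid.
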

Combining Theorems~\ref{main1} and~\ref{main2} immediately yields
Theorem~\ref{thm:q-derange}. In Section~3, we construct the desired
sign-reversing and descent-set-preserving involution by extending Wachs' setting to decorated permutations.

Last but not least, we would like to mention that the framework developed in this paper immediately yields a result of D\'esarm\'enien and Wachs \cite{DesarmenienWachs1988, DesarmenienWachs1993} concerning the equidistribution between descent classes of derangements and descent classes of desarrangements. Recall that the notion of a \textit{desarrangement} was  introduced to give a combinatorial explanation of the  recurrence relation for the derangement numbers $d_n$. A desarrangement is  
a permutation whose first ascent is even. Let $A^e_n$ denote the set of all desarrangements in $\mathfrak{S}_n$.
For example, $A^e_1 = \emptyset$, $A^e_2 = \{21\}$, $A^e_3 = \{213, 312\}$ and 
\[A^e_4 = \{3241, 3142, 2143, 2134, 4231, 4132, 4123, 3124, 4321\}.\]

We write $\delta_n$ for the decreasing permutation $n, n-1, \dots, 1$. Observe that $\delta_n \in A_n^e$ if and only if $n$ is even.

D\'esarm\'enien and Wachs \cite{DesarmenienWachs1988, DesarmenienWachs1993} introduced the $q$\textit{-derangement} numbers defined via
\begin{equation}\label{defi:desar} 
e_n(q)=\sum_{\sigma\in A^e_n} q^{\operatorname{maj}(\sigma^{-1})},
\end{equation}
and established the identity
\begin{equation}\label{eq:2-3}
d_n(q)=e_n(q).
\end{equation}

In fact, they proved the following general statement.

\begin{prop}[D\'esarm\'enien--Wachs]\label{prop-deradesarra}  
For every subset $J \subseteq \{1, 2, \dots, n-1\}$,
\begin{equation}\label{prop-deradesarra-eq}
\bigl|\{\sigma \in \mathcal{D}_n \mid \Des(\sigma) = J\}\bigr|
= \bigl|\{\sigma \in A^e_n \mid \IDes(\sigma) = J\}\bigr|,
\end{equation}
where $\IDes(\sigma) \coloneqq \Des(\sigma^{-1})$.
\end{prop}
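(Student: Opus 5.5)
The plan is to compute the signed sum $\sum_{\pi\in E_n,\ \Des(\pi)=J}(-1)^{\nfix(\pi)}$ in two ways. I take the involution of Theorem~\ref{main2} as given; it will be constructed in Section~3. Since that involution is sign-reversing and preserves the descent set, it cancels everything in $G_n$ inside each descent class. This leaves only the unsigned derangements, so for every $J\subseteq[n-1]$
\[
\bigl|\{\sigma\in\mathcal{D}_n \mid \Des(\sigma)=J\}\bigr|=\sum_{\substack{\pi\in E_n\\ \Des(\pi)=J}}(-1)^{\nfix(\pi)}
=\sum_{k=0}^{n}(-1)^k\bigl|\{\pi\in E_{n,k}\mid \Des(\pi)=J\}\bigr|.
\]
It then remains to show that the right-hand side counts desarrangements by inverse descent set.

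Next I give a descent-set-preserving bijection between $E_{n,k}$ and the set of $\tau\in\mathfrak{S}_n$ in which the letters $k,k-1,\ldots,1$ occur in this order. Let $\pi\in E_{n,k}$ have signed fixed points $\bar{s}_1,\ldots,\bar{s}_k$ with $s_1<\cdots<s_k$. In the order \eqref{order} these are the $k$ smallest letters. Because $\bar s_j$ sits at position $s_j$, reading left to right they appear in decreasing order $\bar s_1>\cdots>\bar s_k$. Standardize $\pi$ by replacing its letters with $1,\ldots,n$ in the order \eqref{order}; call the result $\tau$. Since standardization preserves relative order, it preserves $\Des$. In $\tau$ the letters $1,\ldots,k$ occupy the positions $S=\{s_1,\ldots,s_k\}$ and read $k,k-1,\ldots,1$ from left to right.

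Conversely, take any $\tau$ in which $k,\ldots,1$ appear in that order. Let $S$ be the set of their positions. The remaining letters give an arbitrary bijection from $[n]\setminus S$ onto $\{k+1,\ldots,n\}$, and hence an arbitrary bijection of $[n]\setminus S$ onto itself. Unsigned fixed points are allowed there, so $\tau$ comes from a unique $\pi\in E_{n,k}$. This proves the bijection, and the only point to check carefully is that no constraint is lost on the unsigned part.

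Now pass to inverses. The condition that $k,\ldots,1$ appear in that order says $\tau^{-1}(1)>\tau^{-1}(2)>\cdots>\tau^{-1}(k)$, i.e. $\rho=\tau^{-1}$ begins with a decreasing run of length at least $k$. Moreover $\Des(\tau)=\IDes(\rho)$. For $\rho\in\mathfrak{S}_n$ let $\ell(\rho)$ be the position of its first ascent, with $\ell(\rho)=n$ if $\rho=\delta_n$. Then $\rho$ is counted for exactly the values $k=0,1,\ldots,\ell(\rho)$, so
\[
\sum_{k=0}^{n}(-1)^k\bigl|\{\pi\in E_{n,k}\mid \Des(\pi)=J\}\bigr|
=\sum_{\substack{\rho\in\mathfrak{S}_n\\ \IDes(\rho)=J}}\ \sum_{k=0}^{\ell(\rho)}(-1)^k .
\]
The inner sum is $1$ if $\ell(\rho)$ is even and $0$ otherwise. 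Permutations with $\ell(\rho)$ even are exactly the desarrangements $A^e_n$; the convention $\ell(\delta_n)=n$ matches the remark that $\delta_n\in A^e_n$ iff $n$ is even. This gives \eqref{prop-deradesarra-eq}.

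With Theorem~\ref{main2} assumed, the argument is essentially bookkeeping. The step needing care is the bijection: the signed fixed points must really form a decreasing subsequence of the smallest letters, and the unsigned part must be unconstrained. As a by-product, summing $q^{\maj}$ over each $E_{n,k}$ through this bijection also recovers Theorem~\ref{main1}.
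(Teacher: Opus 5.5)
Your proposal is correct and is essentially the paper's argument. The paper likewise evaluates $S_J=\sum_{k}(-1)^k\bigl|\{\sigma\in\mathfrak{S}_{n,k}\mid \Des(\sigma)=J\}\bigr|$ in two ways: once through the descent-preserving bijection $\phi\colon E_{n,k}\to\mathfrak{S}_{n,k}$ and the involution $\Psi$, which leaves only derangements, and once through the observation that $\sigma\in\mathfrak{S}_{n,k}$ exactly when the first ascent of $\sigma^{-1}$ is at least $k$, so the alternating sum collapses to permutations whose inverse has even first ascent. Your explicit standardization construction of $\phi$ and your direct passage to $\rho=\tau^{-1}$, instead of going through $\mathcal{F}_n$, only spell out steps the paper treats as routine.
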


In \cite{DesarmenienWachs1993}, the authors asked for a direct bijection of \eqref{prop-deradesarra-eq}  between these sets.  In Section 4, we show that our framework yields an immediate proof of this proposition. Additionally, we supply combinatorial proofs for the following recurrence relations satisfied by the $q$-derangement numbers $d_n(q)$, leveraging the identity \eqref{eq:2-3} together with the insertion lemma for ordinary permutations.

\begin{prop}\label{prop:recurr} For all integers $n\geq 1$, 
\begin{align}
d_n(q) &= [n]_q d_{n-1}(q) + (-1)^n q^{\binom{n}{2}}, \label{rec-eqa} \\[5pt] 
d_n(q) &= [n-1]_q\big(d_{n-1}(q) + q^{n-1} d_{n-2}(q)\big).  \label{rec-eqb}
\end{align}
\end{prop}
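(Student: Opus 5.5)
The plan is to derive \eqref{rec-eqa} and \eqref{rec-eqb} from the identity $d_n(q)=e_n(q)$ in \eqref{eq:2-3}, working throughout with desarrangements $\sigma\in A^e_n$ weighted by $q^{\maj(\sigma^{-1})}$. Passing to inverses, let $B_n=\{\sigma^{-1}:\sigma\in A^e_n\}$, so that $e_n(q)=\sum_{\tau\in B_n}q^{\maj(\tau)}$. The condition ``first ascent of $\sigma$ is even'' becomes a condition on where the values $1,2,\dots$ sit in $\tau$: if $\sigma$ begins with the decreasing run $\sigma_1>\dots>\sigma_{2m}<\sigma_{2m+1}$, then $\tau$ has the values $\sigma_{2m},\dots,\sigma_1$ with positions read right to left. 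It may be more convenient to keep $\sigma$ itself and use the inverse-major-index form of the insertion lemma. The \emph{insertion lemma} for ordinary permutations says that inserting the letter $n$ into the $n$ slots of a permutation of $[n-1]$ increases $\maj$ by each of $0,1,\dots,n-1$ exactly once. Equivalently, for $\maj(\sigma^{-1})$, inserting a new value into $\sigma$ contributes $[n]_q$.

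For \eqref{rec-eqa}, I would use the standard Désarménien decomposition. Let $\sigma\in A^e_n$ with $\sigma\neq\delta_n$, and remove its last letter (or the largest value, depending on which version matches the inverse insertion lemma). Then reduce to obtain a permutation of $[n-1]$. The key claim is that this map is $n$-to-one onto $A^e_{n-1}$, where $\delta_{n-1}$ is added or removed according to parity, and that the $n$ preimages realize the weights $q^{i}$ for $i=0,\dots,n-1$. For this I would apply the insertion lemma to $\sigma^{-1}$, inserting the new element $n$ into the word $\tau\in B_{n-1}$. Adding a letter at the end of $\sigma$ corresponds to inserting the largest value $n$ into $\tau$. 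The only failures of closure come from the decreasing permutations. Appending to $\delta_{n-1}$ the letters that keep it a desarrangement, and discarding the one that does not, produces the correction term. Checking the sign, $\maj(\delta_n^{-1})=\binom n2$, and $\delta_n\in A^e_n$ exactly when $n$ is even, so the correction is $(-1)^nq^{\binom n2}$. Alternatively, \eqref{rec-eqa} follows at once from Theorem~\ref{thm:q-derange} by splitting off the $k=n$ term, and I would note this as a sanity check.

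For \eqref{rec-eqb}, I would split $A^e_n$ according to whether the first ascent occurs after position $2$ or exactly at $2$. Desarrangements whose first ascent is at position $2$ look like $\sigma_1>\sigma_2<\sigma_3$. Deleting $\sigma_1,\sigma_2$ leaves a word whose reduction lies in $A^e_{n-2}$ after a suitable normalization. Deleting $\sigma_1$ instead gives a structure over $A^e_{n-1}$. Using the insertion lemma, I would show that the choices contribute $[n-1]_q$ in each case, together with an extra factor $q^{n-1}$ in the $d_{n-2}$ case. Algebraically, I would also verify \eqref{rec-eqb} from \eqref{rec-eqa}: the two recurrences are equivalent by substituting \eqref{rec-eqa} for $d_n$ and $d_{n-1}$. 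That equivalence gives a fallback if the direct bijection for \eqref{rec-eqb} is messy.

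The main obstacle will be the bookkeeping in the insertion step. Because the weight is $\maj(\sigma^{-1})$ rather than $\maj(\sigma)$, I must identify precisely which local operation on $\sigma$ is the insertion of an extreme letter into $\sigma^{-1}$. I must also check that the desarrangement condition is preserved for all but the exceptional preimages. My first attempt would be to insert the value $n$ into $\tau=\sigma^{-1}$, which is the same as appending a letter at the end of $\sigma$ and relabeling. I would then track how the first ascent of $\sigma$ changes under this operation.
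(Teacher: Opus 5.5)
Your argument for \eqref{rec-eqa} is the paper's argument. Appending a last letter to $\sigma$ and relabelling is the same as inserting the value $n$ into $\sigma^{-1}\in\mathcal F_{n-1}$. Deleting the last letter preserves the first ascent except at the decreasing words. The insertion lemma then gives the factor $[n]_q$, and the correction $(-1)^nq^{\binom n2}$ comes from $\delta_n$. Only the ``last letter'' version works: deleting the value $n$ from $\sigma$ does not preserve the first ascent, e.g.\ $4123\mapsto 123$.

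For \eqref{rec-eqb}, your direct plan fails as stated.

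\emph{The split does not match the two terms.} Splitting $A^e_n$ by whether the first ascent is exactly $2$ cannot match the two terms, whichever way you pair them. For $n=3$, both elements of $A^e_3=\{213,312\}$ have first ascent $2$, while $q^{2}[2]_q\,d_1(q)=0$. For $n=4$, the two classes have $8$ and $1$ elements, while at $q=1$ the two terms equal $3d_3=6$ and $3d_2=3$.

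\emph{The deletions fail.} Removing $\sigma_1$ lowers the first ascent by one, so it never yields a desarrangement. Removing $\sigma_1\sigma_2$ from a word with first ascent $2$ leaves an unrestricted word: $2134\mapsto 34$, which reduces to $12\notin A^e_2$.

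\emph{Your fallback does work.} By \eqref{rec-eqa} at $n-1$, $q^{n-1}[n-1]_q d_{n-2}(q)=q^{n-1}d_{n-1}(q)+(-1)^nq^{\binom n2}$. Since $[n-1]_q+q^{n-1}=[n]_q$, \eqref{rec-eqb} follows for $n\ge 2$; for $n=1$ both sides vanish because $[0]_q=0$. This proves the stated identity, but not combinatorially.

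The paper's combinatorial proof is exactly the bijective form of this computation. In $\mathcal F_n$ it splits according to the label $r$ of the slot where $n$ was inserted:
\begin{itemize}
\item labels $0\le r\le n-2$ give $[n-1]_q e_{n-1}(q)$;
\item for the maximal label $r=n-1$, the word left after deleting $n$ is written uniquely as $I_{n-2,r'}(v)$ with $v\in\mathcal F_{n-2}$, by Lemma~\ref{lem:desarrangement-insertion} one size lower. This gives $q^{n-1}[n-1]_q e_{n-2}(q)$.
\end{itemize}
The exceptional words $\delta_{n-1},\delta_n$ match up correctly in both parities. So the idea your direct route lacks is to split the insertion labels as $\{0,\dots,n-2\}\cup\{n-1\}$ and apply the insertion lemma a second time, rather than splitting by the position of the first ascent.
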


These recurrences are $q$-analogues of the well-known classical recursions for ordinary derangement numbers $d_n$. They may be readily deduced from the Gessel-Wachs formula \eqref{eq:q-derange} for $d_n(q)$. A combinatorial proof of \eqref{rec-eqa} previously appeared in \cite{DesarmenienWachs1993},   relying on the equality \eqref{eq:2-3}, where the argument employs Foata's second fundamental transformation. By contrast, our proof of \eqref{rec-eqa} relies  on the insertion lemma.

\section{Proof of Theorem \ref{main1} }

Given $0\leq k\leq n$, let $\mathfrak{S}(k+1,\ldots, n)$ denote the set of permutations on $\{k+1,\ldots, n\}$, and let $\delta_k=(k,k-1,\ldots 1)$ with the convention that $\delta_0$ is the empty word.   There is a canonical descent-preserving bijection
\[
  \phi\colon E_{n,k}\longrightarrow \cup_{\beta \in \mathfrak{S}(k+1,\ldots, n)} \delta_k \shuffle \beta
\]
such that $\Des(\phi(\pi))=\Des(\pi)$ for every $\pi\in E_{n,k}$. 

Using this natural bijection $\phi$, we give a short direct proof of Theorem \ref{main1}, relying on  MacMahon’s major index formula \eqref{eq:MacM} and the Garsia-Gessel shuffle identity \eqref{Gasia-Gessel}.

\begin{proof}[Proof of Theorem \ref{main1}] By the descent-set-preserving bijection $\phi$, we derive that 
\begin{align*}
\sum_{\pi\in E_{n,k}}q^{\maj(\pi)}
&= \sum_{\beta\in \mathfrak{S}{\{k+1,\ldots,n\}}}
\sum_{\phi(\pi) \in \delta_k \shuffle \beta }q^{\maj(\phi(\pi))}\\[5pt]
&\overset{\eqref{Gasia-Gessel}}{=}\sum_{\beta\in \mathfrak{S}{\{k+1,\ldots,n\}}}q^{\maj(\delta_k)+\maj(\beta)}{n\brack k}_q\\[5pt]
&{=}q^{{k\choose 2}}{n\brack k}_q\sum_{\beta\in \mathfrak{S}{\{k+1,\ldots,n\}}}q^{\maj(\beta)}\\[5pt]
&\overset{\eqref{eq:MacM}}{=}q^{{k\choose 2}}{n\brack k}_q[n-k]_q!=q^{\binom{k}{2}}\frac{[n]_q!}{[k]_q!},
\end{align*}
 as desired. 
\end{proof}

\section{An involution}

In this section, we aim to prove Theorem \ref{main2} by constructing the sign-reversing and descent-set-preserving involution on decorated permutations in the Wachs' setting.  In fact, this involution establishes the following stronger refined identity: 

\begin{thm}\label{main2a} For $0\leq k\leq n-1$, let  $\sigma \in {\mathcal{D}}_k$ and let $E_n(\sigma)$ denote the set of  permutations $\pi$ in $E_n$ such that $dp(\pi)=\sigma$, we have 
\begin{equation}
\sum_{\pi\in E_n(\sigma)}(-1)^{\nfix(\pi)}z^{\des(\pi)}q^{\maj(\pi)}=0.
\end{equation}
\end{thm}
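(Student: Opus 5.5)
The plan is to build, for each fixed $\sigma\in\mathcal{D}_k$ with $m=n-k\ge 1$, a sign-reversing involution $\Phi$ on $E_n(\sigma)$ with $\Des(\Phi(\pi))=\Des(\pi)$. Preserving the descent set preserves both $\des$ and $\maj$, so the refined identity of Theorem~\ref{main2a} follows at once.

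\textbf{The naive involution fails.} One cannot simply toggle the sign of one chosen fixed point. Already for $n=3$ and $\sigma=21$ we have $\Des(321)=\{1,2\}$ but $\Des(3\,\overline{2}\,1)=\{1\}$. The correct pairings there are $321\leftrightarrow 21\overline{3}$ and $3\overline{2}1\leftrightarrow 213$. So the involution must be allowed to change the \emph{set} of fixed points, keeping only the derangement part $\sigma$ fixed. This is exactly why it should be built in Wachs' setting rather than position by position.

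\textbf{Step 1: a Wachs-type encoding.} Extend Wachs' bijection to decorated permutations. Given $\pi\in E_n(\sigma)$, record the subword of nonfixed letters, which reduces to $\sigma$, and the fixed letters with their decorations. Regard the signed fixed letters as the smallest letters, in the order \eqref{order}. Then:
\begin{itemize}
\item signed fixed letters always compare as a decreasing block among themselves, just as $\delta_j$ does in the bijection $\phi$ of Section~2;
\item unsigned fixed letters behave like an increasing block $\iota_{m-j}$.
\end{itemize}
The aim is a descent-set-preserving bijection
\[
E_n(\sigma)\cap\{\nfix=j\}\;\longleftrightarrow\;\sigma'\shuffle(\delta_j\shuffle \iota_{m-j})
\]
for a suitable relabelling $\sigma'$ of $\sigma$. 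Here $\delta_j\shuffle\iota_{m-j}$ is read in the alphabet $\bar m<\dots<\bar 1<1<\dots<m$, and the shuffle is taken so that the relative interleaving data determine the fixed-point set.

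This should follow Wachs' excedant/fixed/subcedant rearrangement. The point is that a fixed point $i$, signed or not, is compared with its neighbours only through whether those neighbours are excedances or subcedances. That information is encoded by $\sigma$ together with the relative positions of the fixed points. Granting this, and applying Garsia--Gessel \eqref{Gasia-Gessel}, the signed sum factors as $q^{\maj(\sigma)}{n\brack m}_q$ times
\[
\sum_{j=0}^{m}(-1)^j q^{\binom j2}{m\brack j}_q=\prod_{i=0}^{m-1}(1-q^i)=0 .
\]
This product vanishes because of its $i=0$ factor.

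\textbf{Step 2: the involution itself.} To get a genuine involution rather than this computation, define $\Phi$ on the inner words $w\in\delta_j\shuffle\iota_{m-j}$. Look at the letter of absolute value $1$: it is either $1$ or $\overline{1}$. Toggle its sign and move it across the boundary between the barred and unbarred letters. Concretely, in the sorted picture $\bar j\cdots\bar 1\,1\cdots(m-j)$, the letter $\bar 1$ is the last barred letter and $1$ is the first unbarred letter. Both are adjacent to the bar/unbar boundary, so converting one into the other (and shifting labels) changes no comparison with neighbours. In particular it preserves the descent set of the shuffle with $\sigma'$. Pulling $\Phi$ back through the encoding gives the required involution on $E_n(\sigma)$. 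It changes $\nfix$ by $\pm1$, preserves $\Des$, and in the example above it should reproduce the pairings $321\leftrightarrow21\overline{3}$ and $3\overline{2}1\leftrightarrow213$.

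\textbf{Main obstacle.} I expect the hard step to be Step 1: making the decorated Wachs encoding preserve $\Des$ exactly, not merely $\maj$. The delicate cases are adjacent fixed points, and a fixed point adjacent to a nonfixed letter whose relative order flips when the fixed point changes from unsigned to signed. My first attempt would be to verify descent preservation locally, comparing each adjacent pair of positions by type: (fixed, fixed), (fixed, excedance), (fixed, subcedance), with each fixed point either signed or unsigned. I would then check that the label shifting in Step 2 respects each case.
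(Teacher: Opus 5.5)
Your proposal has a genuine gap, and it is in Step 2, not Step 1. You claim that converting $\overline{1}\leftrightarrow 1$ ``changes no comparison with neighbours,'' but you only consider neighbours from the fixed-point word. Consider the Wachs-type encoding you invoke, i.e.\ the paper's $\psi_n$: subcedants become $1,\dots,s(\sigma)$, fixed points become $\pm(s(\sigma)+1),\dots,\pm(s(\sigma)+m)$ from left to right, excedants take the top values, and positions are kept. In it, the subcedant letters $1,\dots,s(\sigma)$ of $\widetilde{\sigma}$ lie strictly between every barred and every unbarred fixed letter. This is forced by $\pi$ itself: an unsigned fixed point is larger than an adjacent subcedant, while a signed one is smaller.

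Because the encoding preserves positions, your toggle pulls back to exactly the naive toggle of the smallest fixed point, which you refuted in your first paragraph. Concretely, for $\sigma=21$ we have $\widetilde{\sigma}=3\,1$. Your rule sends $3\,2\,1$ to $3\,\overline{2}\,1$, i.e.\ $321\mapsto 3\overline{2}1$, and $3\,1\,2$ to $3\,1\,\overline{2}$, i.e.\ $213\mapsto 21\overline{3}$. Both change the descent set, so Step 2 does not produce the pairings $321\leftrightarrow 21\overline{3}$ and $3\overline{2}1\leftrightarrow 213$ that you predict.

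Step 1, by contrast, is a routine check of adjacent pairs by type; this is the paper's Lemma~\ref{lem:inv}. Also, in your labelling $\delta_j\shuffle\iota_{m-j}$ both $\overline{1}$ and $1$ can occur. The distinguished letter has to be the first letter of the fixed-point word, i.e.\ the smallest fixed point, or involutivity is at risk.

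The missing idea is what the paper does. After flipping the sign of $\gamma_1$ in $\widetilde{\pi}$, slide it across a maximal run of adjacent letters in $(\overline{s(\sigma)+1},s(\sigma)+1)$, which are exactly the subcedant letters of $\widetilde{\sigma}$. The direction and the run are chosen by comparing its two neighbours, so that the ascent or descent destroyed on one side is recreated on the other. In the example this gives $3\,2\,1\leftrightarrow 3\,1\,\overline{2}$ and $3\,1\,2\leftrightarrow 3\,\overline{2}\,1$. Since only letters of $\widetilde{\sigma}$ are crossed, the moved letter stays first among the fixed letters, so the result stays in $\Sh(\widetilde{\sigma},\Gamma(\sigma))$. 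The real work is checking that this case analysis preserves $\Des$ and is an involution.

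What your Step 2 does prove is that toggling $\gamma_1$ preserves $\Des(\gamma)$ of the fixed-point word alone. You could turn this into a non-bijective proof of the refined identity using the fact that the multiset of descent sets over $u\shuffle v$ depends only on $\Des(u)$ and $\Des(v)$. This follows from the $P$-partition identity $\sum_{\alpha\in u\shuffle v}F_{\Des(\alpha)}=F_{\Des(u)}F_{\Des(v)}$ for fundamental quasisymmetric functions. But that gives no involution on $E_n(\sigma)$, which is what you set out to construct. Likewise, your Garsia--Gessel computation controls only $q^{\maj}$, not $z^{\des}$.
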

Summing the above identity over all $\sigma\in \mathcal{D}_k$ for $0 \leq k \leq n-1$ immediately recovers   Theorem \ref{main2}.

We first extend Wachs’ combinatorial setting to  decorated permutations.  For $\pi = \pi_1  \ldots \pi_n \in E_n$, we say that a letter $\pi_i$ of $\pi$ is an excedant (resp. subcedant) of $\pi$ if $\pi_i > i$ (resp. $\pi_i < i$). Let $s(\pi)$ and $e(\pi)$ denote the numbers of subcedants and excedants of $\pi$ respectively.   We now fix $n$ and let $k \leq n$.  {\it The map ${\psi}_{n}$} is defined as follows: Let $\pi \in E_k$, then  ${\psi}_{n}(\pi)=\tilde{\pi}$ is obtained from $\pi$ by replacing its $i$th smallest subcedant $\pi_j$ by $i$, $i = 1, 2, \ldots, s(\pi)$, its $i$th smallest  (in absolute value) fixed point $\pi_j$ by $(\sgn(\pi_j))(s(\pi) + i)$, $i = 1, 2, \ldots, k - s(\pi) - e(\pi)$, and its $i$th largest excedant by $n - i + 1$, $i = 1, 2, \ldots, e(\pi)$.

  For example, let $\pi = 5\, 8\, \bar{3}\, 1\, 6\, 4\, 7\, 2 \in E_8$. We see that $s(\pi)=3$, $e(\pi)=3$ and $\sigma=dp(\pi)=4\, 6\,  1\, 5\,3\,2$. Then 
  
 \[{\psi}_{8}({\sigma})=6\, 8\,  1\, 7\,3\,2,\quad  {\psi}_{8}({\pi})=6\, 8\, \bar{4}\, 1\, 7\, 3\, 5\, 2.\]

For $\sigma \in {\mathcal{D}}_k$, let $E_n(\sigma)$ denote the set of  permutations $\pi$ in $E_n$ such that $dp(\pi)=\sigma$, and 
let $\Gamma(\sigma)$ be the set of all signed sequences $(a_1,\ldots, a_{n-k})$, where $a_i=s(\sigma)+i$ or  $\overline{s(\sigma)+i}$.     We have the following consequence: 

\begin{lem}\label{lem:inv}  For $0\leq k\leq n$, let  $\sigma \in {\mathcal{D}}_k$ and set $\tilde{\sigma}={\psi}_{n}(\sigma)$.  Let $\Sh(\tilde{\sigma},\Gamma(\sigma))$ denote the shuffles of $\tilde{\sigma}$ and $\gamma \in  \Gamma(\sigma)$. Then the map ${\psi}_{n}$ provides a bijection between $E_n(\sigma)$ and $\Sh(\tilde{\sigma},\Gamma(\sigma))$. Moreover, for $\pi \in E_n(\sigma)$, we have  $\psi_n(\pi)\in\Sh(\widetilde{\sigma},\gamma)$ for some
$\gamma\in\Gamma(\sigma)$ such that
 \begin{equation}\label{lem:rel}
 \Des(\pi)=\Des({\psi}_{n}(\pi)), \quad  \nfix(\pi)=\nmark(\gamma),
 \end{equation}
 where $\nmark(\gamma)$ denotes the number of marked terms in $\gamma$. 
\end{lem}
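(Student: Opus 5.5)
The plan is to analyze $\psi_n$ position by position on $\pi\in E_n(\sigma)$, mirroring Wachs' argument for ordinary permutations. First fix $\pi\in E_n(\sigma)$. Its letters fall into three classes: subcedants, fixed points (signed or unsigned) and excedants.

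\emph{Step 1: the map is well defined and lands in the right set.} Because $dp(\pi)=\sigma$, the subword of non-fixed points of $\pi$ reduces to $\sigma$, so $s(\pi)=s(\sigma)$ and $e(\pi)=e(\sigma)$. The number of fixed points is therefore $n-k$.

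Under $\psi_n$:
\begin{itemize}
\item subcedants become $1,\dots,s(\sigma)$;
\item the fixed points, read left to right, become $s(\sigma)+1,\dots,s(\sigma)+n-k$ with their signs kept;
\item excedants become $n-e(\sigma)+1,\dots,n$.
\end{itemize}
Since fixed points occur in increasing order of absolute value from left to right, the fixed-point subword of $\psi_n(\pi)$ is a sequence $\gamma=(a_1,\dots,a_{n-k})\in\Gamma(\sigma)$, and the number of marked terms of $\gamma$ equals $\nfix(\pi)$.

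For the non-fixed subword, note that within $\pi$ the relative order of subcedants and excedants agrees with their order in $\sigma$. The reason is that a letter of $\pi$ is a subcedant (resp.\ excedant) iff the corresponding letter of $\sigma$ is one. I expect this fact to be the key, and the main obstacle, so I would check it carefully: removing the fixed points shifts positions and values in a compatible way, since the number of fixed points to the left of position $j$ equals the number of fixed values below a non-fixed value in the appropriate range. This is exactly Wachs' lemma, and I would reprove it directly. It also implies every subcedant is smaller than every excedant relative to the same threshold structure, so replacing subcedants by the smallest values and excedants by the largest values (order-preservingly within each class) produces exactly $\widetilde{\sigma}=\psi_n(\sigma)$ as the non-fixed subword. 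Hence $\psi_n(\pi)\in\Sh(\widetilde{\sigma},\gamma)$.

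\emph{Step 2: bijectivity.} Given $\tau\in\Sh(\widetilde{\sigma},\gamma)$, invert by recovering $\pi$. The positions of the $\gamma$-letters in $\tau$ are declared to be the fixed points, and we set $\pi_j=\pm j$ there with the sign of $\gamma$. On the remaining positions we place the values of $[n]$ not used as fixed points, arranged in the pattern $\sigma$ (the reduction). Then $dp(\pi)=\sigma$ trivially, and applying $\psi_n$ returns $\tau$ by Step 1. Injectivity follows since $\pi$ is determined by its fixed-point positions and signs together with $\sigma$. Both sets have $2^{n-k}\binom{n}{k}$ elements, which serves as a consistency check.

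\emph{Step 3: descent sets.} Show that $\psi_n$ preserves the relative order of every pair of adjacent letters, using the order $\bar n<\cdots<\bar1<1<\cdots<n$. There are three kinds of adjacent pair:
\begin{itemize}
\item \emph{Two non-fixed letters.} The order is preserved because of Step 1.
\item \emph{A fixed point $\pm j$ next to a subcedant $\pi_{j\pm1}$.} A subcedant at position $j+1$ satisfies $\pi_{j+1}\le j$, so it is less than an unsigned fixed point $j$. A subcedant at position $j-1$ satisfies $\pi_{j-1}<j-1<j$. After the map, the subcedant goes to at most $s(\sigma)$, which is less than $s(\sigma)+i$. A signed fixed point is smaller than everything positive, both before and after. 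So the order is preserved.
\item \emph{A fixed point next to an excedant.} This is symmetric: an excedant adjacent to position $j$ exceeds $j$, and maps to a value exceeding all the $s(\sigma)+i$.
\end{itemize}
Two adjacent fixed points are increasing in absolute value; with signs, $\pm j,\pm(j+1)$ compare the same way as the corresponding $\pm(s+i),\pm(s+i+1)$. Thus $\Des(\pi)=\Des(\psi_n(\pi))$, completing \eqref{lem:rel}.
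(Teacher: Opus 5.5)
Your overall route matches the paper's. You identify the fixed-point subword of $\psi_n(\pi)$ with $\gamma$, show the non-fixed subword is $\widetilde{\sigma}$, invert by placing $\sigma$ on the non-fixed positions, and compare adjacent letters pair by pair. Your handling of pairs involving a fixed point is more explicit than the paper's, which defers every pair of positive letters to Wachs.

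However, one step fails as written. The claim in Step 1 that every subcedant is smaller than every excedant is false: in the derangement $2\,4\,1\,3$ the subcedant $3$ exceeds the excedant $2$. Accordingly $\psi_n$ is not order-preserving on the non-fixed letters. For example, $\psi_4(2\,4\,1\,3)=3\,4\,1\,2$ reverses the pair in positions $1$ and $4$. So ``two non-fixed letters: the order is preserved because of Step 1'' does not follow. Step 1 only identifies the non-fixed subword of $\psi_n(\pi)$ with $\widetilde{\sigma}$. It says nothing about how a subcedant and an excedant compare in $\pi$ itself.

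The repair uses adjacency, exactly as in your fixed-point bullets. Two letters of the same class keep their relative order because $\psi_n$ relabels each class order-preservingly. For letters of different classes:
\begin{itemize}
\item if $\pi_j$ is a subcedant and $\pi_{j+1}$ an excedant, then $\pi_j<j<j+1<\pi_{j+1}$;
\item if $\pi_j$ is an excedant and $\pi_{j+1}$ a subcedant, then $\pi_{j+1}\le j<\pi_j$.
\end{itemize}
Either way the subcedant is the smaller letter. This remains true after $\psi_n$, where subcedants receive values at most $s(\sigma)$ and excedants values at least $n-e(\sigma)+1$.

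The false claim is also unnecessary in Step 1, which needs only class preservation and order-preservation within each class. Class preservation is immediate once you note that fixed positions and fixed values coincide. The non-fixed positions and the non-fixed values then form the same set $C=\{c_1<\cdots<c_k\}$, and reduction applies the single map $c_i\mapsto i$ to both. Hence $\pi_{c_i}>c_i$ if and only if $\sigma_i>i$.

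The same observation supplies the check hidden in your ``trivially'' in Step 2. Setting $\pi_{c_i}=c_{\sigma_i}$ creates no new fixed points because $\sigma_i\neq i$. So the fixed-point set is exactly the set of $\gamma$-positions, and $dp(\pi)=\sigma$.
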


\begin{proof} 
By the construction of $\psi_n$, it is straightforward to verify that $\psi_n$
gives a bijection between $E_n(\sigma)$ and
$\Sh(\widetilde{\sigma},\Gamma(\sigma))$. It remains to prove the two
relations in \eqref{lem:rel}. Wachs proved that the corresponding map preserves descent sets in the unsigned
case. We claim that the signed extension considered here also preserves descent
sets. Indeed, when two adjacent letters are both positive, the assertion is
exactly Wachs' result. If one of the adjacent letters is a minus-signed fixed
point, then this letter is smaller than every positive letter with respect to
the order \eqref{order}. Thus the descent status of such an adjacent pair is unchanged under $\psi_n$.
Finally, if both adjacent letters are minus-signed fixed points, then their
relative order is reversed by the above signed order, and the same reversal is
reflected in the order of the fixed-point word $\gamma \in \Gamma(\sigma)$. Hence
$\psi_n$ preserves descent sets, that is,
$\Des(\pi)=\Des(\psi_n(\pi)).$ Moreover,   $\nfix(\pi)=\nmark(\gamma).$ This completes the proof.
\end{proof}

Armed with Lemma \ref{lem:inv}, we are now  in a position to prove Theorem \ref{main2a}. The main idea of the proof is to flipping the sign and sliding the position of the first term of $\gamma$ in $\Sh(\tilde{\sigma},\Gamma(\sigma))$.

\begin{proof}[Proof of Theorem \ref{main2a}] Given $\sigma \in \mathcal{D}_k$ for $0\leq k\leq n-1$, it suffices to construct an involution $\Psi$ on $E_n(\sigma)$ 
such that, for every $\pi\in E_n(\sigma)$,
\begin{equation}\label{pf-rel} 
  (-1)^{\nfix(\Psi(\pi))}=-(-1)^{\nfix(\pi)},\qquad
  \Des(\Psi(\pi))=\Des(\pi).
\end{equation}

Let $\pi \in E_n(\sigma)$ and set $\tilde{\pi}={\psi}_{n}(\pi)$. By Lemma \ref{lem:inv}, we see that $\tilde{\pi}\in \Sh(\tilde{\sigma},\Gamma(\sigma))$, where $\tilde{\sigma}={\psi}_{n}(\sigma)$.  Thus we may write $\tilde{\pi}=\tilde{\sigma} \shuffle \gamma $, where $\gamma=\gamma_1\ldots \gamma_{n-k} \in \Gamma(\sigma)$. Note that  $\gamma_1=\overline{s(\sigma)+1}$ or $s(\sigma)+1$.
Suppose $\tilde{\pi}=\tilde{\pi}_1\ldots \tilde{\pi}_n$ and let $j$ be the index such that  $\tilde{\pi}_j=\gamma_1$. We further adopt the convention $\tilde{\pi}_0=\tilde{\pi}_{n+1}=+\infty$.

We now define a word $\widetilde{\tau}$ by changing the sign of the distinguished
letter $\gamma_1$ and, when necessary, sliding it across a maximal string of
letters in the interval $(\overline{s(\sigma)+1},s(\sigma)+1)$ so that the descent set is preserved. Note that all intervals below are taken with
respect to the   order given as   \eqref{order}. Finally,   define 
\[
\Psi(\pi)=\tau=\psi_n^{-1}(\widetilde{\tau}).\]

We distinguish three cases: 

\begin{enumerate}
\item Suppose that $\gamma_1=\overline{s(\sigma)+1}$, that is $\tilde{\pi}_j=\overline{s(\sigma)+1}$. We consider the following two subcases:
\begin{enumerate}

\item If neither $\tilde{\pi}_{j-1}$ or   $\tilde{\pi}_{j+1}$ belongs to    $(\overline{s(\sigma)+1}, {s(\sigma)+1})$, then we define
$\widetilde{\tau}$ by replacing $\overline{s(\sigma)+1}$ in $\widetilde{\pi}$ by ${s(\sigma)+1}$:
\[
\widetilde{\tau}
=
\widetilde{\pi}_1\cdots
\widetilde{\pi}_{j-1}\,
{(s(\sigma)+1)}\,
\widetilde{\pi}_{j+1}\cdots
\widetilde{\pi}_n.
\]. 

\item Suppose that $\tilde{\pi}_{j-1} \in (\overline{s(\sigma)+1}, {s(\sigma)+1})$. There are two cases: 

\begin{enumerate}

\item If  $\tilde{\pi}_{j-1}< \tilde{\pi}_{j+1}$ or $\tilde{\pi}_{j+1}=\overline{s(\sigma)+2}$,  then choose maximal $r$  such that $1\leq r\leq j-1$ and
$\widetilde{\pi}_{j-i}\in
(\widetilde{\pi}_{j-i+1},s(\sigma)+1)$ {for } $1\leq i\leq r.$ Equivalently, $\widetilde{\pi}_{j-r-1}\notin(\widetilde{\pi}_{j-r},s(\sigma)+1)$.  Define
\[
\widetilde{\tau}
=
\widetilde{\pi}_1\cdots
\widetilde{\pi}_{j-r-1}\,
(s(\sigma)+1)\,
\widetilde{\pi}_{j-r}\cdots
\widetilde{\pi}_{j-1}
\widetilde{\pi}_{j+1}\cdots
\widetilde{\pi}_n.
\]

\item If  $\tilde{\pi}_{j-1}> \tilde{\pi}_{j+1}$ and $\tilde{\pi}_{j+1}\neq \overline{s(\sigma)+2}$,   then choose maximal  $r$ such that $1\leq r\leq n-j$ and
$\widetilde{\pi}_{j+i}\in
(\widetilde{\pi}_{j+i-1},s(\sigma)+1)$ for $ 1\leq i\leq r.$ Equivalently, $\widetilde{\pi}_{j+r+1}\notin
(\widetilde{\pi}_{j+r},s(\sigma)+1)$. Define
\[
\widetilde{\tau}
=
\widetilde{\pi}_1\cdots
\widetilde{\pi}_{j-1}
\widetilde{\pi}_{j+1}\cdots
\widetilde{\pi}_{j+r}\,
(s(\sigma)+1)\,
\widetilde{\pi}_{j+r+1}\cdots
\widetilde{\pi}_n.
\]

\end{enumerate}
 
\item Suppose that  $\tilde{\pi}_{j+1} \in (\overline{s(\sigma)+1}, {s(\sigma)+1})$. 

\begin{enumerate}

\item If  $\tilde{\pi}_{j-1}< \tilde{\pi}_{j+1}$, and so $\tilde{\pi}_{j-1} \in   (\overline{s(\sigma)+1}, {s(\sigma)+1})$,   then we apply the construction
of Case 1 (b) (i) in the leftward direction.

\item If  $\tilde{\pi}_{j-1}> \tilde{\pi}_{j+1}$,   then we apply the construction
of Case 1 (b) (ii) in the rightward direction.

\end{enumerate}
\end{enumerate}

\item Suppose that $\gamma_1={s(\sigma)+1}$, that is $\tilde{\pi}_j={s(\sigma)+1}$. 
\begin{enumerate}

\item If neither $\tilde{\pi}_{j-1}$ nor   $\tilde{\pi}_{j+1}$ belongs to  $(\overline{s(\sigma)+1}, {s(\sigma)+1})$,  then we define
$\widetilde{\tau}$ by replacing $s(\sigma)+1$  in $\widetilde{\pi}$ by $\overline{s(\sigma)+1}$ :
\[
\widetilde{\tau}
=
\widetilde{\pi}_1\cdots
\widetilde{\pi}_{j-1}\,
\overline{{s(\sigma)+1}}\,
\widetilde{\pi}_{j+1}\cdots
\widetilde{\pi}_n.
\]

 \item Suppose that  $\tilde{\pi}_{j-1} \in (\overline{s(\sigma)+1}, {s(\sigma)+1})$.  

\begin{enumerate}

\item If  $\tilde{\pi}_{j-1}< \tilde{\pi}_{j+1}$ and $\tilde{\pi}_{j+1} \in (\overline{s(\sigma)+1}, {s(\sigma)+1})$, then   choose maximal $r$ such that $1\leq r\leq n-j$ and $\tilde{\pi}_{j+i} \in (\overline{s(\sigma)+1}, \tilde{\pi}_{j+i-1})$ for $1\leq i\leq r$. Equivalently, $\tilde{\pi}_{j+r+1} \not \in (\overline{s(\sigma)+1}, \tilde{\pi}_{j+r})$. Define  
\[\tilde{\tau}=\tilde{\pi}_1 \,\cdots \tilde{\pi}_{j-1} \, \tilde{\pi}_{j+1} \, \cdots\, \tilde{\pi}_{j+r}\, \overline{s(\sigma)+1} \, \tilde{\pi}_{j+r+1}\,\cdots   \tilde{\pi}_{n}. \]

\item If  $\tilde{\pi}_{j-1}< \tilde{\pi}_{j+1}$ and $\tilde{\pi}_{j+1} \not \in (\overline{s(\sigma)+1}, {s(\sigma)+1})$,   then   choose maximal $r$ such that $1\leq r\leq j$ and $\tilde{\pi}_{j-i} \in (\overline{{s(\sigma)+1}}, \tilde{\pi}_{j-i+1} )$ for $1\leq i\leq r$. Equivalently,  $\tilde{\pi}_{j-r-1} \not \in (\overline{{s(\sigma)+1}}, \tilde{\pi}_{j-r})$. Define
\[\tilde{\tau}=\tilde{\pi}_1 \,\ldots \tilde{\pi}_{j-r-1} \, \overline{s(\sigma)+1} \, \tilde{\pi}_{j-r}\,\ldots \,\tilde{\pi}_{j-1} \,\tilde{\pi}_{j+1}\,\ldots\, \tilde{\pi}_{n}.\]

\item If  $\tilde{\pi}_{j-1}> \tilde{\pi}_{j+1}$, then we apply the construction of Case 2 (b) (ii) in the leftward direction.    

\end{enumerate}
\item Suppose that  $\tilde{\pi}_{j+1} \in (\overline{s(\sigma)+1}, {s(\sigma)+1})$. 

\begin{enumerate}

\item If  $\tilde{\pi}_{j-1}< \tilde{\pi}_{j+1}$,    then we apply the construction of Case 2 (b) (i) in the rightward direction.  

\item If  $\tilde{\pi}_{j-1}> \tilde{\pi}_{j+1}$ and $\tilde{\pi}_{j-1} \in (\overline{s(\sigma)+1}, {s(\sigma)+1})$,  then we apply the construction of Case 2 (b) (ii) in the leftward direction.    

\item If  $\tilde{\pi}_{j-1}> \tilde{\pi}_{j+1}$ and $\tilde{\pi}_{j-1} \not \in (\overline{s(\sigma)+1}, {s(\sigma)+1})$,   then we apply the construction of Case 2 (b) (i) in the rightward direction. 

\end{enumerate}
 
\end{enumerate}

\end{enumerate}
In each of the above cases, the word $\widetilde{\tau}$ is obtained from
$\widetilde{\pi}$ by changing exactly one letter, namely $\overline{s(\sigma)+1}$  to $s(\sigma)+1$  or
$s(\sigma)+1$  to $\overline{s(\sigma)+1}$ , and possibly moving this letter across a maximal string of
letters lying in the interval $(\overline{s(\sigma)+1},s(\sigma)+1)$. A direct comparison of adjacent
letters shows that $
\Des(\widetilde{\tau})=\Des(\widetilde{\pi}).
$ Moreover, $\widetilde{\tau}\in \Sh(\widetilde{\sigma},\Gamma(\sigma)).$ 
Hence, by Lemma~\ref{lem:inv}, $\Des(\tau)=\Des(\pi).$ Since exactly one sign has been changed, we also have $\nfix(\tau)\equiv \nfix(\pi)+1 \pmod 2.
$ Thus $\Psi$ is sign-reversing and descent-set-preserving.

Finally,   applying the same rule to $\tau$
changes the distinguished letter back and slides it to its original position.
Therefore, $\Psi(\Psi(\pi))=\pi.$ 
This proves that $\Psi$ is the desired involution and completes the proof. 
\end{proof}

For example, let $\pi=1\,6\,\bar3\,5\,8\,2\,7\,4\in E_8.$ Then
$\sigma=dp(\pi)=4\,6\,1\,5\,3\,2.$ 
We have
\[
\widetilde{\sigma}=\psi_8(\sigma)=6\,8\,1\,7\,3\,2
\]
and
\[
\widetilde{\pi}=\psi_8(\pi)
=
6\,8\,\bar4\,1\,7\,3\,5\,2
\in
\Sh(\widetilde{\sigma},\gamma),
\ \text{where}\  \gamma=\bar4\,5.
\]
It can be checked that $\widetilde{\pi}$ falls into Case~1(c) (ii), and the construction gives
\[
\widetilde{\tau}=6\,8\,1\,4\,7\,3\,5\,2.
\]
Thus
\[
\tau=\Psi(\pi)=5\,8\,1\,4\,6\,3\,7\,2.
\]
Hence $\tau\in E_8(\sigma)$, since $
dp(\tau)=dp(\pi)=4\,6\,1\,5\,3\,2.$ 
Moreover, $(-1)^{\nfix(\tau)}
=
-(-1)^{\nfix(\pi)}$, $
\Des(\pi)=\Des(\tau)=\{2,5,7\},$
and $\Psi(\tau)=\pi.$

The following table illustrates the involution
on $G_3=E_3\setminus \mathcal{D}_3$: 
\[
\begin{array}{c|l}
\toprule
\Des & \text{Pairings} \\
\midrule
\varnothing
& 123\leftrightarrow \bar{1}23 \\[2mm]
\{1\}
& 213\leftrightarrow 3\bar{2}1,
  \qquad 1\bar{2}3\leftrightarrow \bar{1}\bar{2}3 \\[2mm]
\{2\}
& 132\leftrightarrow \bar{1}32,
  \qquad 12\bar{3}\leftrightarrow \bar{1}2\bar{3} \\[2mm]
\{1,2\}
& 321\leftrightarrow 21\bar{3},
  \qquad 1\bar{2}\bar{3}\leftrightarrow \bar{1}\bar{2}\bar{3} \\
\bottomrule
\end{array}
\]

\section{Proofs of Proposition \ref{prop-deradesarra} and  Proposition \ref{prop:recurr}}

For a permutation \(\sigma=\sigma_1\cdots \sigma_n\), let
\[
a(\sigma)=\min\{i \in [n-1]\colon  \sigma^{-1}_i<\sigma^{-1}_{i+1}\},
\]
if such an ascent exists, and put \(a(\delta_n)=n\) for the decreasing word $\delta_n=n(n-1)\cdots 1.$ 
Set
\[
\mathcal F_n=\{\sigma \in \mathfrak{S}_n \colon \sigma^{-1} \in A^e_n\}.
\]
In other words, $\sigma \in F_n$ if and only if $a(\sigma)$ is even. 

It is easy to see that the proof of Proposition \ref{prop-deradesarra}  is equivalent to showing that

\begin{prop}\label{prop-deradesarra-v2}
For all $J \subseteq \{1, 2, \dots, n-1\}$,
\begin{equation}\label{prop-deradesarra-eq}
\bigl|\{\sigma \in \mathcal{D}_n \mid \Des(\sigma) = J\}\bigr|
= \bigl|\{\sigma \in \mathcal F_n \mid \Des(\sigma) = J\}\bigr|.
\end{equation}
\end{prop}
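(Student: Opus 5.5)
The plan is to compute, for a fixed $J\subseteq\{1,\dots,n-1\}$, the signed count
\[
S_J=\sum_{\substack{\pi\in E_n\\ \Des(\pi)=J}}(-1)^{\nfix(\pi)}
\]
in two different ways. One evaluation will give the left side of \eqref{prop-deradesarra-eq}, the other the right side.

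\textbf{First evaluation.} The involution $\Psi$ from the proof of Theorem~\ref{main2a} is defined on each $E_n(\sigma)$ with $\sigma\in\mathcal D_k$, $k\le n-1$. Taken together over all such $\sigma$, it is a sign-reversing, descent-set-preserving involution on $G_n=E_n\setminus\mathcal D_n$. Restricting to decorated permutations with descent set $J$, all of $G_n$ cancels. The survivors are the ordinary derangements, each with sign $+1$. Hence
\[
S_J=\bigl|\{\sigma\in\mathcal D_n\mid \Des(\sigma)=J\}\bigr|.
\]

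\textbf{Second evaluation.} Split $S_J$ by $k=\nfix(\pi)$ and use the descent-set-preserving bijection $\phi$ of Section~2. It maps $E_{n,k}$ onto the union of $\delta_k\shuffle\beta$ over $\beta\in\mathfrak S(k+1,\ldots,n)$. This union is exactly the set $A_k$ of permutations $\sigma\in\mathfrak S_n$ in which the letters $k,k-1,\dots,1$ occur in this left-to-right order; we take $A_0=A_1=\mathfrak S_n$. Therefore
\[
S_J=\sum_{k=0}^n(-1)^k\bigl|\{\sigma\in A_k\mid\Des(\sigma)=J\}\bigr|.
\]
The key observation is that $\sigma\in A_k$ if and only if $i+1$ precedes $i$ in $\sigma$ for every $i<k$. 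By definition of $a(\sigma)$ (the smallest $i$ with $i$ preceding $i+1$, and $a(\delta_n)=n$), this holds if and only if $a(\sigma)\ge k$. Interchanging the order of summation,
\[
S_J=\sum_{\substack{\sigma\in\mathfrak S_n\\ \Des(\sigma)=J}}\ \sum_{k=0}^{a(\sigma)}(-1)^k .
\]
The inner alternating sum equals $1$ when $a(\sigma)$ is even and $0$ when it is odd. So $S_J=\bigl|\{\sigma\in\mathcal F_n\mid\Des(\sigma)=J\}\bigr|$, and comparing the two evaluations gives \eqref{prop-deradesarra-eq}.

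\textbf{Main obstacle.} The only delicate point is identifying the image of $\phi$ on $E_{n,k}$ with $A_k$ while keeping descent sets intact. The intended $\phi$ relabels the signed fixed points $\bar i$ as small letters. Because $\bar k<\dots<\bar1$ and all signed letters lie below all unsigned ones in the order \eqref{order}, the signed fixed points (which appear at increasing positions $i$) are sent in order of position to $k,k-1,\dots,1$, and the remaining letters are shifted order-preservingly into $\{k+1,\dots,n\}$. I would verify explicitly that this relabeling is order-preserving on letters, hence preserves $\Des$, and that it is bijective onto $\bigcup_\beta\delta_k\shuffle\beta$. I would also check the boundary conventions: $k=0,1$ give no constraint, and $k=n$ gives only $\delta_n$, consistent with $a(\delta_n)=n$.
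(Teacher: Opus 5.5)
Your proposal is correct and follows the paper's proof essentially step for step. Both compute the same signed count $S_J$ twice: first via the involution $\Psi$, which cancels $G_n$ and leaves only the derangements; then via $\phi$ together with the characterization $\sigma\in\bigcup_\beta\delta_k\shuffle\beta \iff a(\sigma)\ge k$, with the alternating sum collapsing onto $a(\sigma)$ even. Your explicit order-preserving description of $\phi$ and your interchange of summation, in place of the paper's grouping of the $a\ge 2k$ and $a\ge 2k+1$ terms, only fill in details the paper leaves implicit.
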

\noindent{\it Proof.} Let
    \[
    \mathfrak{S}_{n,k} = \bigcup_{\beta\in \mathfrak{S}(k+1,\dots,n)} \delta_k \sqcup \beta,
    \]
    and define
    \begin{equation*} 
    S_J = \sum_{k=0}^n (-1)^k \bigl| \bigl\{ \sigma \in \mathfrak{S}_{n,k} \,\big|\, \operatorname{Des}(\sigma)=J \bigr\} \bigr|.
    \end{equation*}
    We will show that $S_J$ coincides with both the left-hand side and the right-hand side of \eqref{prop-deradesarra-eq}, which yields the desired equality.
First, applying the descent-preserving bijection $\phi$ established in Section 2 and the involution $\Psi$ from the proof of Theorem \ref{main2a} in Section 3, we derive that 
    \[
    \begin{aligned}
    S_J &= \sum_{k=0}^n (-1)^k \bigl| \bigl\{ \sigma \in \mathfrak{S}_{n,k} \,\big|\, \operatorname{Des}(\sigma)=J \bigr\} \bigr| \\
    &= \sum_{k=0}^n (-1)^k \bigl| \bigl\{ \sigma \in E_{n,k} \,\big|\, \operatorname{Des}(\sigma)=J \bigr\} \bigr| \\
    &= \bigl| \bigl\{ \sigma\in \mathcal{D}_n \,\big|\, \operatorname{Des}(\sigma)=J \bigr\} \bigr|.
    \end{aligned}
    \]
On the other hand, it follows directly from definition that $\sigma\in \mathfrak{S}_{n,k}$ if and only if $\alpha(\sigma)\ge k$. Substituting this   gives
    \[
    \begin{aligned}
    S_J &= \sum_{k=0}^n (-1)^k \bigl| \bigl\{ \sigma \in \mathfrak{S}_{n,k} \,\big|\, \operatorname{Des}(\sigma)=J \bigr\} \bigr| \\
    &= \sum_{k=0}^n (-1)^k \bigl| \bigl\{ \sigma \in \mathfrak{S}_n \,\big|\, \alpha(\sigma)\ge k,\ \operatorname{Des}(\sigma)=J \bigr\} \bigr| \\
    &= \sum_{k\ge 0} \bigl| \bigl\{ \sigma\in \mathfrak{S}_n \,\big|\, \alpha(\sigma)\ge 2k,\ \operatorname{Des}(\sigma)=J \bigr\} \bigr| \\
    &\quad - \sum_{k\ge 0} \bigl| \bigl\{ \sigma\in \mathfrak{S}_n \,\big|\, \alpha(\sigma)\ge 2k+1,\ \operatorname{Des}(\sigma)=J \bigr\} \bigr| \\
    &= \sum_{k\ge 0} \bigl| \bigl\{ \sigma\in \mathfrak{S}_n \,\big|\, \alpha(\sigma)=2k,\ \operatorname{Des}(\sigma)=J \bigr\} \bigr| \\
    &= \bigl| \bigl\{ \sigma \in F_n \,\big|\, \operatorname{Des}(\sigma) = J \bigr\} \bigr|.
    \end{aligned}
    \]
This completes the proof. \qed

We conclude the paper by presenting combinatorial proofs for the recurrence relations in Proposition \ref{prop:recurr}, utilizing Proposition \ref{prop-deradesarra-v2} alongside the  insertion lemma for ordinary permutations. As a direct consequence of Proposition \ref{prop-deradesarra-v2}, the $q$-desarrangement number admits the equivalent combinatorial formulation
    \begin{equation}\label{eq:DW}
    e_n(q)=\sum_{w\in\mathcal F_n}q^{\maj(w)}.
    \end{equation}

We conclude this paper with combinatorial proofs of  Proposition \ref{prop:recurr} with the aid of Proposition \ref{prop-deradesarra-v2} and the insertion lemma for ordinary permutations. By Proposition \ref{prop-deradesarra-v2}, we see that   $q$-desarrangement number can be expressed as,
\begin{equation}\label{eq:DW}
e_n(q)=\sum_{w\in\mathcal F_n}q^{\maj(w)}.
\end{equation}

Let \(\sigma=\sigma_1\sigma_2\cdots \sigma_n\in \mathfrak{S}_n\).  There are \(n+1\) positions in \(\sigma\), namely the positions before \(\sigma_1\), between two consecutive letters, and after \(\sigma_m\).  We use the standard major-index labelling of these positions:

\begin{itemize}
  \item label the final position, after \(\sigma_n\), by \(0\);
  \item label the descent positions, namely the positions after positions \(i\) with \(\sigma_i>\sigma_{i+1}\), from right to left by
  \[
  1,2,\ldots,\des(\sigma);
  \]
  \item label the remaining positions from left to right by
  \[
  \des(\sigma)+1,\des(\sigma)+2,\ldots,n.
  \]
\end{itemize}

Let \(I_{n,r}(\sigma)\) denote the permutation obtained by inserting the new largest letter \(n+1\) into the position  labeled \(r\).  The classical   insertion lemma asserts that
\begin{equation}\label{eq:maj-insertion}
\maj(I_{n,r}(w))=\maj(w)+r,
\qquad 0\le r\le n,
\end{equation}
see, for example,  Gupta \cite{Gupta-1978},  Haglund, Loehr and Remmel \cite{Haglund-Loehr-Remmel-2005} and Han \cite{Han-1992}. 

We shall use the following elementary form of the desarrangement insertion property.

\begin{lem}\label{lem:desarrangement-insertion}
For \(n\ge 1\),  
\begin{equation}\label{eqn:desa-insert}
\{I_{n,r}(u):u\in\mathcal F_n,\,0\le r\le n\}
=
\begin{cases}
\mathcal F_{n+1}\setminus\{\delta_{n+1}\}, & \text{if } n \text{ is odd},\\[4pt]
\mathcal F_{n+1}\cup\{\delta_{n+1}\}, & \text{if }  n \text{ is even}.
\end{cases}
\end{equation}
\end{lem}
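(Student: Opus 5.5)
The plan is to reduce everything to one observation: inserting the largest letter $n+1$ leaves $a(\cdot)$ unchanged, except in one case. The first step is to reinterpret $a(\sigma)$ in terms of positions of letters. The condition $\sigma^{-1}_i<\sigma^{-1}_{i+1}$ says that the letter $i$ occurs to the left of the letter $i+1$ in $\sigma$. Hence $a(\sigma)$ is the least $i$ such that $i$ precedes $i+1$. Equivalently, when $\sigma\neq\delta_n$, $a(\sigma)$ is the largest $m$ for which the letters $m,m-1,\ldots,1$ occur in this (decreasing) left-to-right order. The convention $a(\delta_n)=n$ is consistent with this description.

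The second step is to use the fact that the insertion map $(u,r)\mapsto I_{n,r}(u)$ is a bijection from $\mathfrak S_n\times\{0,1,\ldots,n\}$ onto $\mathfrak S_{n+1}$. Indeed, the $n+1$ labels correspond bijectively to the $n+1$ slots of $u$. The inverse sends $w\in\mathfrak S_{n+1}$ to the word $u$ obtained by deleting $n+1$, together with the label of the slot it occupied. Therefore the left-hand side of \eqref{eqn:desa-insert} is exactly the set of $w\in\mathfrak S_{n+1}$ whose deletion $u=w\setminus\{n+1\}$ lies in $\mathcal F_n$. It then suffices to compare $a(w)$ with $a(u)$.

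The third step is this comparison. Deleting $n+1$ does not change the relative order of the letters $1,\ldots,n$.
\begin{itemize}
\item If $u\neq\delta_n$, then $i=a(u)\le n-1$ is the first index with $i$ before $i+1$, and this is read off from $w$ in the same way. Hence $a(w)=a(u)$.
\item If $u=\delta_n$ and $n+1$ is not placed in the first slot, then $n$ precedes $n+1$ in $w$, while $i+1$ precedes $i$ for every $i<n$. Hence $a(w)=n=a(u)$.
\item If $u=\delta_n$ and $n+1$ is placed first, then $w=\delta_{n+1}$. In this case $a(w)=n+1$, whereas $a(u)=n$.
\end{itemize}
Thus $a(w)=a(u)$ for every $w\neq\delta_{n+1}$, and the only exception is $w=\delta_{n+1}$, where the parity of $a$ flips.

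Finally, I would read off the answer. For $w\neq\delta_{n+1}$ we have $w\in\mathcal F_{n+1}$ if and only if $u\in\mathcal F_n$. For $w=\delta_{n+1}$, the deleted word $u=\delta_n$ lies in $\mathcal F_n$ exactly when $n$ is even, while $\delta_{n+1}\in\mathcal F_{n+1}$ exactly when $n$ is odd.
\begin{itemize}
\item When $n$ is odd, $\delta_{n+1}\in\mathcal F_{n+1}$ but it is not in the image, so the image is $\mathcal F_{n+1}\setminus\{\delta_{n+1}\}$.
\item When $n$ is even, $\delta_{n+1}\notin\mathcal F_{n+1}$ but it is in the image, so the image is $\mathcal F_{n+1}\cup\{\delta_{n+1}\}$, a disjoint union.
\end{itemize}
This gives both cases of \eqref{eqn:desa-insert}.

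The only delicate point is the boundary case $u=\delta_n$ in the third step. I would handle it carefully via the convention $a(\delta_n)=n$, checking in particular that the label of the first slot is the one producing $\delta_{n+1}$; by the labelling rule, this first slot is the descent slot with the largest label when $n\ge2$.
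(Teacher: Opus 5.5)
Your proof is correct and follows the same idea as the paper's. Inserting $n+1$ only appends a new last entry to $u^{-1}$, which is equivalent to your observation that the relative order of $i$ and $i+1$ is unchanged for $i\le n-1$. Hence $a$ is preserved except at the single word $\delta_{n+1}$. Your case analysis pins down this exception more precisely than the paper does: the paper loosely calls it ``the strictly decreasing permutation,'' whereas you identify it as the front insertion into $\delta_n$.

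Your closing worry about labels is unnecessary, since the left-hand side uses every $r$. Also, in the paper's labelling the front slot is not a descent slot but the leftmost ``remaining'' slot, with label $\des(u)+1$, which equals $n$ for $u=\delta_n$.
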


\begin{proof}
Let \(u\in \mathfrak{S}_n\), and insert the entry \(n+1\) into \(u\) to yield a permutation \(w\in \mathfrak{S}_{n+1}\). Passing to the inverse permutation \(w^{-1}\), this insertion operation equivalently adds a new position to \(u^{-1}\). Consequently, all pairwise comparisons
    \[
    u^{-1}_i< u^{-1}_{i+1}, \qquad 1\le i\le n-1,
    \]
    remain unchanged. It follows that the first ascent of \(u^{-1}\) is preserved under this insertion, with the sole exception of the strictly decreasing permutation. This decreasing permutation constitutes the only exceptional case. Recall that \(\delta_n \in \mathcal{F}_{n}\) if and only if \(n\) is even. As a result, the exceptional permutation \(\delta_{n+1}\) is excluded from the left-hand side of \eqref{eqn:desa-insert} when \(n+1\) is even, and appears as an additional permutation when \(n+1\) is odd. This completes the proof of the assertion.
\end{proof}

\noindent{\it Proof of  Proposition \ref{prop:recurr}. } It is enough to prove the recurrence \eqref{rec-eqa}  in terms of the combinatorial interpretation \eqref{eq:DW}, that is 
\begin{equation}\label{eq:recdesage}
e_n(q)=[n]_qe_{n-1}(q)+ (-1)^n q^{\binom{n}{2}}.
\end{equation}

We only treat the case where $n$ is odd; the even  case follows by analogous reasoning.
Observe that $\delta_n \notin \mathcal{F}_n$ while $\delta_{n-1} \in \mathcal{F}_{n-1}$.
Combining   \eqref{eq:maj-insertion} with Lemma \ref{lem:desarrangement-insertion}, we derive that 
\begin{align*}
e_{n}(q) &= \sum_{\sigma \in \mathcal{F}_{n} } q^{\operatorname{maj}(\sigma)} \\[5pt]
&= \sum_{u \in \mathcal{F}_{n-1} }\sum_{i=0}^{n-1}q^{\operatorname{maj}(I_{n-1,i}(u))} - q^{\operatorname{maj}(\delta_{n})} \\[5pt]
&= \bigl(1+q+\cdots+q^{n-1}\bigr) \sum_{u \in \mathcal{F}_{n-1} } q^{\operatorname{maj}(u)} - q^{\binom{n}{2}} \\[5pt]
&= [n]_q e_{n-1}(q)-q^{\binom{n}{2}}.
\end{align*}
This verifies the recurrence \eqref{rec-eqa}.

We now turn to the second recurrence \eqref{rec-eqb}. Our goal is to prove
\begin{equation}\label{rec-b}
e_n(q) = [n-1]_q\bigl(e_{n-1}(q) + q^{n-1} e_{n-2}(q)\bigr).
\end{equation}
We partition $\mathcal{F}_n$ into two disjoint subsets as follows.

The first set collects permutations obtained by inserting the value $n$ into any non-maximal major-index insertion position of elements in $\mathcal{F}_{n-1}$:
\[
\mathcal{C}_1 =
\bigl\{I_{n-1,r}(u) \,\big|\, u\in\mathcal{F}_{n-1},\ 0\le r\le n-2\bigr\}.
\]

The second set consists of permutations constructed in two insertion steps: first insert $n-1$ into an element of $\mathcal{F}_{n-2}$, then place $n$ at the maximal major-index insertion position:
\[
\mathcal{C}_2 =
\bigl\{I_{n-1,n-1}\bigl(I_{n-2,r}(v)\bigr) \,\big|\,
 v\in\mathcal{F}_{n-2},\ 0\le r\le n-2\bigr\}.
\]

We claim that
\begin{equation}\label{eq:decomposition}
\mathcal F_n=\mathcal C_1\sqcup\mathcal C_2.
\end{equation}
Indeed, take \(w\in\mathcal F_n\) and delete the letter \(n\).  Suppose that \(n\) had been inserted into the position labelled \(r\).  If \(0\le r\le n-2\), then by Lemma~\ref{lem:desarrangement-insertion} the remaining word belongs to \(\mathcal F_{n-1}\), and hence \(w\in\mathcal C_1\).  If \(r=n-1\), then deleting \(n\) leaves a word which, by applying Lemma~\ref{lem:desarrangement-insertion} one size lower, is uniquely of the form
\[
I_{n-2,r'}(v),
\qquad v\in\mathcal F_{n-2},\quad 0\le r'\le n-2.
\]
Thus \(w\in\mathcal C_2\).

The two classes are disjoint because the letter \(n\) is inserted in a non-maximal major-index position in \(\mathcal C_1\), whereas it is inserted in the maximal major-index position in \(\mathcal C_2\).  The exceptional decreasing words in Lemma~\ref{lem:desarrangement-insertion} are also accounted for correctly: if \(n\) is even, then \(\delta_n\in\mathcal F_n\) arises from \(\delta_{n-1}\), which appears as the exceptional image from \(\mathcal F_{n-2}\); if \(n\) is odd, then \(\delta_n\notin\mathcal F_n\), and the corresponding exceptional insertion is absent.  Hence \eqref{eq:decomposition} holds.

Using \eqref{eq:maj-insertion}, the contribution of \(\mathcal C_1\) is
\[
\sum_{u\in\mathcal F_{n-1}}\sum_{r=0}^{n-2}q^{\maj(u)+r}
=
[n-1]_q\sum_{u\in\mathcal F_{n-1}}q^{\maj(u)}
=
[n-1]_qe_{n-1}(q).
\]

Similarly, the contribution of \(\mathcal C_2\) is
\[
\begin{aligned}
\sum_{v\in\mathcal F_{n-2}}\sum_{r=0}^{n-2}
q^{\maj(I_{n-1,n-1}(I_{n-2,r}(v)))}
&=
\sum_{v\in\mathcal F_{n-2}}\sum_{r=0}^{n-2}
q^{\maj(v)+r+n-1} \\
&=q^{n-1}[n-1]_q
\sum_{v\in\mathcal F_{n-2}}q^{\maj(v)} \\
&=q^{n-1}[n-1]_qe_{n-2}(q).
\end{aligned}
\]
Summing the contributions of the disjoint sets $\mathcal{C}_1$ and $\mathcal{C}_2$, together with the identity \eqref{eq:DW}, yields   \eqref{rec-b}.
This completes the proof. \qed

 \vskip 0.2cm
\noindent{\bf Acknowledgment.} This work
was supported by   the National Science Foundation of China.  We are grateful to Prof. Catherine H.F. Yan for her insightful comments and suggestions.

\end{document}